\newtheorem{theorem}{Theorem}[section] 
\newtheorem{lemma}[theorem]{Lemma}
\newtheorem{proposition}[theorem]{Proposition}
\newtheorem{corollary}[theorem]{Corollary}
\theoremstyle{remark}
\newtheorem{remark}[theorem]{Remark}
\newtheorem{example}[theorem]{Example}
\newtheorem{question}{Question}
\theoremstyle{definition}
\newtheorem{definition}[theorem]{Definition}
\newcommand{\CC}{\mathbb{C}}
\newcommand{\RR}{\mathbb{R}}
\newcommand{\QQ}{\mathbb{Q}}
\newcommand{\ZZ}{\mathbb{Z}}
\newcommand{\NN}{\mathbb{N}}
\newcommand{\PP}{\mathbb{P}}
\newcommand{\CO}{{\mathcal{O}}}
\newcommand{\X}{{\mathcal{X}}}
\newcommand{\D}{{\mathfrak{D}}}
\renewcommand{\u}{{\mathbf{u}}}
\DeclareMathOperator{\Fut}{Fut}
\DeclareMathOperator{\tail}{tail}
\DeclareMathOperator{\pos}{pos}
\DeclareMathOperator{\vol}{vol}
\DeclareMathOperator{\Cl}{Cl}
\DeclareMathOperator{\face}{face}
\title{On irregular Sasaki-Einstein metrics in dimension~$5$}
\author[H. S\"u{\ss}]{Hendrik S\"u\ss}
\address{Hendrik S\"u\ss\\ School of Mathematics,
The University of Manchester,
Alan Turing Building,
Oxford Road
Manchester M13 9PL}
\email{\href{mailto:hendrik.suess@manchester.ac.uk}{hendrik.suess@manchester.ac.uk}}
\begin{document}
\maketitle
\begin{abstract}
  We show that there are no irregular Sasaki-Einstein structures on rational homology 5-spheres. On the other hand, using K-stability we prove the existence of continuous families of non-toric irregular Sasaki-Einstein structures on odd connected sums of $S^2 \times S^3$.
\end{abstract}


\section{Introduction}
In this paper we study Fano cone singularities $X$ polarised by a Reeb field $\xi$ in the sense of Collins-Sz\'ekelyhidi. The choice of such a polarisation induces a Sasakian metric structure on the link of $X$. This Sasakian structure is called \emph{quasi-regular} if the Reeb field generates a one-dimensional torus action it is called \emph{irregular} if the corresponding torus is at least two-dimensional.

In their paper \cite{collins2015sasaki}  Collins and Sz\'ekelyhidi show that for an appropriate scaling of $\xi$ the K-stability of the polarised cone singularity $(X,\xi)$ is equivalent to the existence of a Ricci-flat cone K\"ahler metric or equivalently the existence of a Sasaki-Einstein structure  with Reeb field $\xi$ on the link of $X$. For a long time only quasi-regular Sasaki-Einstein structures were known. Eventually in \cite{zbMATH02201007, zbMATH06085780} irregular toric Sasaki-Einstein structures on $S^2 \times S^3$ were found. Further examples were studied e.g. in \cite{1126-6708-2005-08-054,OOTA2007377,zbMATH05306068,zbMATH05682662}. In \cite{zbMATH05034151} the construction of \cite{zbMATH02201007} was generalised to produce higher dimensional non-toric examples. However, in particular in dimension $5$ there are still open problems connected to the existence of irregular Sasaki-Einstein structures. In \cite{zbMATH05556823} and \cite{zbMATH06085995}, respectively,  Sparks asks the following two questions.
\begin{question}
  Are there continuous families of irregular Sasaki-Enstein structures in dimension $5$?
\label{quest:irreg-families}
\end{question}
\begin{question}
  Are there any non-toric irregular Sasaki-Einstein structures in dimension $5$?
\label{quest:irreg-non-toric}
\end{question}
In \cite{collins2015sasaki} Collins and Sz\'ekelyhidi ask 
\begin{question}
  Are there irregular Sasaki-Einstein on $S^5$?
\label{quest:irreg-sphere}
\end{question}
The purpose of this article is to answer these three questions. Our main tools are the combinatorial description of torus actions via polyhedral divisors, which has been developed by Altmann and Hausen in \cite{pre05013675}, and the results of Collins-Sz\'ekelyhidi from \cite{collins2015sasaki}. The paper is organised as follows. In Section~\ref{sec:spheres} we derive the non-existence of irregular Sasaki-Einstein structures on the $5$-sphere from a more general result. Hence, we give a negative answer to Question~\ref{quest:irreg-sphere}. In Section~\ref{sec:k-stability-fano} we recall the notion of K-stability for polarised cone singularities. In Section~\ref{sec:polyhedral-divisors} we consider cone singularities arising from polyhedral divisors and study their K-stability. These results are then used in Section~\ref{sec:hom-spheres} to generalise the result on the non-existence of irregular Sasaki-Einstein structures on $S^5$ to the case of rational homology $5$-spheres. In Section~\ref{sec:main-example} we study a family of non-toric irregular Sasaki-Einstein metrics on the connected sums $(2\ell+1)(S^2 \times S^3)$, where for $\ell > 1$ we find non-trivial moduli for these structures. Hence, this example serves as an affirmative answer to both Question~\ref{quest:irreg-families} and \ref{quest:irreg-non-toric}.

\subsection*{Acknowledgments}
 I would like to thank G\'abor Sz\'ekelyhidi for pointing me to this problem, Tristan Collins, Chi Li and James Sparks for helpful conversation on the subject and Joaqu\'in Moraga for helping with the calculations of the fundamental group for the considered examples.

\section{Sasaki-Einstein structures of complexity $1$ on spheres}
\label{sec:spheres}
Let us first fix some notation and then recall the definition of \emph{polarised Fano cone singularities} from \cite{collins2015sasaki}. Fix an algebraic torus $T$. We are going to denote its character lattice by $M$ and its dual lattice by $N$ with their natural pairing $\langle\,,\,\rangle$. Moreover, by $M_\RR$ and $N_\RR$ we denote the corresponding vector spaces.

\begin{definition}
  By a \emph{cone singularity} we mean a normal complex affine variety $X\subset \CC^N$, such that an algebraic torus $T$ acts on $X$ with positive weights. This corresponds to an $M$-grading of the affine coordinate ring 
\[\CC[X]=R=\bigoplus_{u \in M} R_u,\]
such that $R_0=\CC$. An element $\xi \in N_\RR$ is called a \emph{Reeb field} or a \emph{polarisation} of $X$ if it fulfils $\langle u, \xi\rangle > 0$ for all $u \in M \setminus\{0\}$ and $R_u \neq 0$. The Reeb fields form a cone in $N_\RR$ which is called the \emph{Reeb cone} of $X$. Its closure $\sigma \subset N_\RR$ is a pointed, rational and polyhedral cone of full dimension.

The $X$ is called a \emph{Fano cone singularity} if $X$ is additionally $\QQ$-Gorenstein and log-terminal.

The difference $(\dim X - \dim T)$ is called the \emph{complexity} of the cone singularity. Cone singularities of complexity $0$ are called \emph{toric}.
\end{definition}

The condition of having isolated singularities for $X \subset \CC^N$ implies that the link $L=X \cap S^{2N-1}$ is a smooth manifold. Moreover, the choice of $\xi$ induces an Sasakian metric structure on $L$. The log-terminality corresponds to the positivity of this Sasakian metric structure. On the other hand every, positive Sasakian structure on a compact manifold arises as the link of an isolated Fano cone singularity. This is essentially a consequence of \cite[Thm.~8.2.18]{zbMATH05243165}. 

\begin{definition}
  The corresponding Sakaki metrics structures on the link of $X$ are called \emph{quasi-regular} if the Reeb field $\xi \in M_\RR$ is rational and \emph{irregular} otherwise.
\end{definition}

We are also interested in the topology of these links. Hence, we are going to derive basic information on their fundamental group and their homology groups.

\begin{lemma}
\label{lem:sphere-homology}
  Let $X$ be an isolated Fano cone singularity. Let us denote its link by $L$. Then $\Cl(X) \cong H^2(L,\ZZ)$. In particular, if $L$ is a sphere, then $X$ is factorial and if $L$ is a rational homology sphere, then $X$ is $\QQ$-factorial.
\end{lemma}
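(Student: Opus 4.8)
The plan is to relate the divisor class group of the affine cone $X$ to the topology of its link $L$ by exploiting the punctured cone $X \setminus \{0\}$, which is a smooth variety that deformation retracts onto $L$. First I would recall that since $X$ has an isolated singularity at the apex $0$ (the vertex of the torus action), the open subvariety $U := X \setminus \{0\}$ is smooth, and the inclusion $U \hookrightarrow X$ induces an isomorphism $\Cl(X) \cong \Cl(U) = \Pic(U)$ because removing a point of codimension $\geq 2$ does not change the class group and on a smooth variety Weil and Cartier divisors coincide. So the task reduces to computing $\Pic(U)$.

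Next I would identify $\Pic(U)$ with $H^2(L,\ZZ)$. The cone $U = X \setminus \{0\}$ carries the $\CC^*$-action (a subtorus of $T$, or after rescaling the grading so that the Reeb-type weights are positive integers we get a genuine contracting $\CC^*$-action) whose orbits retract $U$ onto $L$; concretely $L$ is a strong deformation retract of $U$, so $H^*(U,\ZZ) \cong H^*(L,\ZZ)$. Since $U$ is smooth, $\Pic(U)$ injects into $H^2(U,\ZZ)$ via the first Chern class; to get surjectivity I would use that the cone singularity is log-terminal, hence in particular rational, and rational singularities have $H^1(U,\CO_U)$ and $H^2(U,\CO_U)$ controlled so that the exponential sequence gives $\Pic(U) \cong H^2(U,\ZZ)$. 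More carefully: from the local cohomology sequence comparing $U$ with $X$, the vanishing $H^i_{\{0\}}(X,\CO_X) = 0$ for $i = 1, 2$ (which holds because $X$ is normal, hence $S_2$, and here Cohen--Macaulay as a log-terminal, i.e. rational, singularity) forces $H^1(U,\CO_U) = 0$ and $H^2(U,\CO_U) = H^2(X,\CO_X) = 0$ since $X$ is affine. The exponential sequence on $U$ then reads $0 \to H^1(U,\CO_U) \to \Pic(U) \to H^2(U,\ZZ) \to H^2(U,\CO_U)$, which collapses to $\Pic(U) \cong H^2(U,\ZZ) \cong H^2(L,\ZZ)$, and stringing the isomorphisms together yields $\Cl(X) \cong H^2(L,\ZZ)$.

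Finally, the consequences are immediate. If $L$ is a homotopy sphere of dimension $2N-1 \geq 5$ (dimension $\geq 3$ suffices), then $H^2(L,\ZZ) = 0$, so $\Cl(X) = 0$ and $X$ is factorial. If $L$ is merely a rational homology sphere, then $H^2(L,\ZZ)$ is a finite group, so $\Cl(X)$ is finite and hence torsion, which is exactly the statement that $X$ is $\QQ$-factorial: every Weil divisor has a multiple that is Cartier. The main obstacle I anticipate is justifying the surjectivity of $c_1 \colon \Pic(U) \to H^2(U,\ZZ)$ cleanly, i.e. pinning down the vanishing of $H^1(U,\CO_U)$ and $H^2(U,\CO_U)$; this is where log-terminality (via rational, hence Cohen--Macaulay, singularities and the resulting depth estimate $\operatorname{depth}_{\{0\}} \CO_X \geq 3$ once $\dim X \geq 3$) does the real work. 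Everything else is formal manipulation of the deformation retract and the exponential sequence.
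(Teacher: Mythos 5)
Your overall strategy --- reduce to $\Pic(U)$ for $U=X\setminus\{0\}$, retract $U$ onto $L$, and run the exponential sequence --- is exactly what lies behind the reference the paper invokes (Flenner's Satz~6.1), and the injectivity half of your argument is sound: normality plus Cohen--Macaulayness (from log-terminal $\Rightarrow$ rational) gives $\operatorname{depth}_{\{0\}}\CO_X=\dim X\geq 3$, hence $H^2_{\{0\}}(X,\CO_X)=0$ and so $H^1(U,\CO_U)=0$, making $c_1\colon\Pic(U)\to H^2(U,\ZZ)$ injective. Note that injectivity alone already yields both ``in particular'' statements, since $H^2(L,\ZZ)$ is zero (resp.\ finite) when $L$ is a sphere (resp.\ a rational homology sphere).

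The gap is in the surjectivity step. The local cohomology sequence does not give $H^2(U,\CO_U)=H^2(X,\CO_X)=0$; since $X$ is affine it gives $H^2(U,\CO_U)\cong H^3_{\{0\}}(X,\CO_X)=H^3_\m(R)$, and in the case that actually matters in this paper ($\dim X=3$, five-dimensional links) this is the \emph{top} local cohomology of a three-dimensional local ring, which is nonzero by Grothendieck's non-vanishing theorem no matter how nice the singularity is. So the obstruction group in the exponential sequence does not vanish, and your depth estimate cannot be pushed to $H^3_{\{0\}}$. The correct argument (and the content of Flenner's theorem) uses the grading coming from the good $\CC^*$-action: $H^2(U,\CO_U)\cong H^3_\m(R)$ is a graded module, the composite $H^2(U,\ZZ)\to H^2(U,\CO_U)$ factors through the degree-zero (invariant) part, and rationality of the graded singularity forces $H^3_\m(R)_d=0$ for all $d\geq 0$ (e.g.\ via the identification of these graded pieces with $H^2(Y,\CO_Y(d))$ for the projective quotient $Y$ and Flenner's criterion for rationality of cone singularities). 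Without this equivariance argument the surjectivity of $c_1$, and hence the isomorphism $\Cl(X)\cong H^2(L,\ZZ)$ as stated, is not established --- and that full isomorphism is used later in the paper to compute $H^2(L_k)$ from $\Cl(X_k)$.
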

\begin{proof}
  By our preconditions a generic one-parameter subgroup of $T$ will induce a \emph{good} $\CC^*$-action on $X$ with the singular point $0$ as its unique fixed point. Hence, $L$ is a deformation retract of $X \setminus \{0\}$. On the other hand, it follows from Satz~6.1 in \cite{zbMATH03715707} that $H^2(X \setminus \{0\},\ZZ) \cong \Cl(X)$. Note, that the singularity of $X$ is rational, since $X$ is assumed to be log-terminal. Hence, the preconditions of \cite[Satz~6.1]{zbMATH03715707} are indeed fulfilled.
\end{proof}

Factorial cone singularities of complexity $1$ have been classified in dimension $2$ by Mori \cite{zbMATH03609829}, in dimension $3$ by Ishida \cite{zbMATH03627320} and in arbitrary dimension by Hausen-Herppich-S\"u{\ss} \cite{tfanos}. For the special case of an isolated cone singularities The results  may be summarised as follows.

\begin{proposition}[{\cite[Thm.~6.5.]{tsing}}]
\label{prop:isolated-factorial}
  The only isolated factorial cone singularities of complexity $1$ and dimension at least $3$ are the following.
  \begin{enumerate}
  \item The affine spaces $\CC^n$,
  \item the \emph{Brieskorn-Pham singularities} in $\CC^4$, given by the equation \[x_1x_2+x_3^p+x_4^q=0,\] with $p, q > 1$ being co-prime, \label{item:bp-sing}
  \item in $\CC^5$, given by the equation \[x_1x_2+x_3x_4 + x_5^p=0,\] 
    for $p > 1$,
  \item in $\CC^6$, given by the equation \[x_1x_2+x_3x_4 + x_5x_6=0.\]
  \end{enumerate}
\end{proposition}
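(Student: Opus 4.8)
The plan is to leverage the existing classification of factorial complexity-one cone singularities (Mori, Ishida, Hausen--Herppich--S\"u\ss) and then impose the additional constraint that the singularity be isolated. In the language of polyhedral divisors, a complexity-one affine $T$-variety $X$ is encoded by a polyhedral divisor $\D$ on a smooth curve $Y$; the factoriality condition translates, via the results of \cite{tfanos}, into a very rigid combinatorial shape for the tailcone $\sigma$ and the coefficient polyhedra $\D_P$. In fact, factorial complexity-one singularities are ``almost toric'': up to a few exceptional families, $Y = \PP^1$ and the combinatorial data resembles that of a toric variety with a small number of non-lattice vertices. So the first step is to recall this classification and extract from it the finitely many candidate families, each with a description of its defining equations.

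The second step is to determine, for each candidate family, when the singularity is isolated. Here the key observation is a standard fact from the theory of $T$-varieties: the singular locus of $X$ is $T$-invariant, hence a union of orbit closures, so $X$ has an isolated singularity precisely when the only singular orbit is the fixed point $0$, equivalently when every ``boundary'' orbit (corresponding to faces of $\sigma$ and to the support of $\D$) is smooth. Translating this into the combinatorics: one needs all the coefficient polyhedra $\D_P$ to be lattice polytopes except possibly for controlled failures, and the facets of $\sigma$ to give smooth toric charts. For the hypersurface families in the list this can be checked directly on the equations --- the Jacobian criterion applied to $x_1x_2 + x_3^p + x_4^q$, to $x_1x_2 + x_3x_4 + x_5^p$, and to $x_1x_2 + x_3x_4 + x_5x_6$ shows the singular locus is exactly the origin under the stated coprimality/positivity hypotheses, while for $p,q$ not coprime (or other degenerate parameter choices) the singularity becomes non-isolated and moreover typically non-factorial, so is excluded already at the previous step.

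The third step is to rule out everything else: one must show that no other family from the classification of \cite{tfanos} can be both factorial and isolated in dimension $\geq 3$. The mechanism is that a non-toric factorial complexity-one singularity which is not on the short hypersurface list necessarily carries a positive-dimensional singular orbit coming from a non-lattice vertex of some $\D_P$ or from a non-smooth cone face --- this is where one uses that the ambient dimension being at least $3$ gives enough room for such a singular locus to be positive-dimensional rather than a point. The affine spaces $\CC^n$ appear as the toric (complexity-zero degeneration) case, or rather as the smooth members. I expect the main obstacle to be the bookkeeping in this third step: one has to go through the combinatorial types in the Hausen--Herppich--S\"u\ss\ list systematically and, for each, exhibit either non-factoriality or a non-isolated singular locus, and making sure no borderline case (small $p$, $q$, or low-dimensional coincidences) slips through. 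Since the statement is quoted from \cite[Thm.~6.5]{tsing}, the cleanest route is simply to cite that theorem and sketch the dimension-counting heuristic above as the reason the list is so short.
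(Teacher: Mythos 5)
The paper offers no proof of this proposition at all: it is quoted verbatim from \cite[Thm.~6.5]{tsing}, and the surrounding text merely points to the classification of factorial complexity-one cone singularities (Mori, Ishida, Hausen--Herppich--S\"u\ss) from which that theorem is extracted. So your closing recommendation --- cite the theorem and sketch the reduction --- is exactly what the paper does, and your outline (classification of factorial complexity-one data via polyhedral divisors, then imposing isolatedness as a combinatorial condition on the coefficient polyhedra and the tail cone) is the right shape of argument for the cited source. One correction, though: you attribute the coprimality hypothesis $\gcd(p,q)=1$ in the Brieskorn--Pham case to isolatedness, claiming the singularity ``becomes non-isolated'' otherwise. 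That is false --- the Jacobian criterion applied to $x_1x_2+x_3^p+x_4^q$ gives gradient $(x_2,x_1,px_3^{p-1},qx_4^{q-1})$, whose zero locus on the hypersurface is the origin for all $p,q\ge 2$, coprime or not. The coprimality condition is forced by \emph{factoriality}: the divisor class group of $V(x_1x_2+x_3^p+x_4^q)$ is nontrivial when $\gcd(p,q)>1$. Your conclusion (such cases are excluded at the factoriality step) is correct, but the stated mechanism is not, and in a full write-up this is precisely the kind of case distinction that has to be assigned to the right filter.
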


\begin{corollary}
\label{cor:sasaki-sphere}
  Let $n$ be at least $3$ then every Sasaki metric structure with an isometric $(S^1)^{n-1}$-action on a sphere $S^{2n-1}$ arises as the link of one of the singularities in Proposition~\ref{prop:isolated-factorial}.
\end{corollary}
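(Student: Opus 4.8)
The plan is to translate the Sasakian hypothesis into the language of Fano cone singularities and then invoke the classification of Proposition~\ref{prop:isolated-factorial}; the substantial content lies there, so what remains is bookkeeping with torus actions together with Lemma~\ref{lem:sphere-homology}. First I would read the isometric $(S^1)^{n-1}$-action, as is customary in this context, as an action by automorphisms of the Sasaki structure, so that it commutes with the Reeb field $\xi$ --- recall that $\xi$ is central in the Lie algebra of the Sasaki automorphism group. Let $T$ be the compact torus generated by $(S^1)^{n-1}$ together with the closure of the flow of $\xi$. Then $T$ preserves the Sasaki structure, the Reeb field $\xi$ lies in its Lie algebra, and $n-1\le\dim T\le n$: the lower bound because $(S^1)^{n-1}\subseteq T$ acts effectively, and the upper bound because, as noted below, the complexification $T_\CC$ acts effectively on an $n$-dimensional variety. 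Since the Sasaki structure is positive --- in particular transverse Fano, as is automatic for the Sasaki--Einstein structures that motivate this corollary --- the fact, a consequence of \cite[Thm.~8.2.18]{zbMATH05243165} recorded above, that such a structure on a compact manifold is the link of an isolated Fano cone singularity presents $L=S^{2n-1}$ as the link of an isolated Fano cone singularity $X$ with $\dim X=n$, and the $T$-action extends to an effective holomorphic action of $T_\CC$ on $X$ for which $\xi$ is a Reeb field. Consequently $X$ is an isolated Fano cone singularity of complexity $\dim X-\dim T\in\{0,1\}$.

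Since $L=S^{2n-1}$ is a sphere and $2n-1\ge5$, Lemma~\ref{lem:sphere-homology} gives $\Cl(X)\cong H^2(S^{2n-1},\ZZ)=0$, so $X$ is factorial. If $\dim T=n$, then $X$ is toric, and a factorial affine toric variety which is an isolated cone singularity with a unique fixed point is smooth and equal to $\CC^n$, which is item~(i) of Proposition~\ref{prop:isolated-factorial}. If instead $\dim T=n-1$, then $X$ is an isolated factorial cone singularity of complexity exactly $1$ and dimension $n\ge3$, so Proposition~\ref{prop:isolated-factorial} applies directly and shows that $X$ is $\CC^n$, the threefold $x_1x_2+x_3^p+x_4^q=0$ with $p,q>1$ coprime, the fourfold $x_1x_2+x_3x_4+x_5^p=0$ with $p>1$, or the fivefold $x_1x_2+x_3x_4+x_5x_6=0$. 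In either case $L$, together with its Reeb field, is by construction the link of one of the singularities listed in Proposition~\ref{prop:isolated-factorial}, equipped with a choice of polarisation, which is the assertion.

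The step that demands genuine care is the passage from the Sasaki side to the algebraic side. One must know that $X$ is actually a \emph{Fano} cone singularity --- that is, that the Sasaki structure is positive, so that $X$ is algebraic and, being log-terminal, has rational singularities --- because this is what makes the identification $\Cl(X)\cong H^2(L,\ZZ)$ of Lemma~\ref{lem:sphere-homology} available, and hence the factoriality of $X$; for Sasaki--Einstein structures, and in the framework of \cite{collins2015sasaki} generally, this positivity is automatic. One should also check that the isometric torus action may indeed be taken to act by automorphisms of the Sasaki structure and that the Reeb field can be placed in the Lie algebra of $T$; this is routine given the centrality of the Reeb field. By contrast, the genuinely deep ingredient, which I treat as a black box, is the classification of factorial cone singularities of complexity $1$ recorded in Proposition~\ref{prop:isolated-factorial}.
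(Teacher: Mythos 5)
Your proposal is correct and takes essentially the route the paper intends (the corollary is stated without an explicit proof there): present $S^{2n-1}$ as the link of an isolated Fano cone singularity with a torus action of complexity at most $1$, deduce factoriality from $H^2(S^{2n-1},\ZZ)=0$ via Lemma~\ref{lem:sphere-homology}, and conclude with the classification in Proposition~\ref{prop:isolated-factorial}. Your extra care with the toric (complexity-$0$) case and with the implicit positivity of the Sasaki structure --- which is needed for Lemma~\ref{lem:sphere-homology} and is automatic in the Sasaki--Einstein applications --- is appropriate and fills in details the paper leaves tacit.
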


\begin{corollary}
   There are no irregular Sasaki-Einstein structures on $S^5$.
\end{corollary}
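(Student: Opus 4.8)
The plan is to combine Corollary~\ref{cor:sasaki-sphere} with an explicit computation of the volume functional on the two cones it leaves open. Suppose $S^5$ carried an irregular Sasaki--Einstein structure. By the Collins--Sz\'ekelyhidi correspondence it is the link of an isolated Fano cone singularity $X$ with $\dim X=3$, polarised by a Reeb field $\xi$ that is not rational; then the subtorus of the acting torus generated by $\xi$ has dimension at least $2$, so the maximal torus $T$ acting on $X$ satisfies $\dim T\ge 2$, and the (unique) Sasaki--Einstein metric can be averaged to be $T$-invariant. Thus $S^5$ admits an isometric $(S^1)^2$-action, and since $\dim X=3\ge 3$, Corollary~\ref{cor:sasaki-sphere} forces $X$ to be either $\CC^3$ or the hypersurface $\{x_1x_2+x_3^p+x_4^q=0\}\subset\CC^4$ with coprime $p,q>1$ as in Proposition~\ref{prop:isolated-factorial}\,(\ref{item:bp-sing}). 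It therefore suffices to show that on each of these two cones the Reeb field of \emph{any} Sasaki--Einstein structure is rational, contradicting the assumed irregularity.

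The tool is the volume minimisation principle of Martelli--Sparks--Yau, in the form used in \cite{collins2015sasaki}: after normalising $\xi$ so that the $\QQ$-Gorenstein holomorphic volume form $\Omega$ has charge $\dim X=3$, the Reeb field of a Ricci-flat K\"ahler cone metric on $X$ is the unique minimiser of the strictly convex normalised volume functional $\widehat{\vol}$ over the Reeb cone of the \emph{maximal} torus $T$, and $\widehat{\vol}$ is, up to a positive constant, the leading coefficient of the $M$-graded Hilbert series of $\CC[X]$. So it is enough to locate this minimiser in the two remaining cases, which is a short calculation.

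For $X=\CC^3$ with its full torus, write $a_1,a_2,a_3$ for the coordinate weights; then $\Omega=dx_1\wedge dx_2\wedge dx_3$ has charge $a_1+a_2+a_3$, the normalisation reads $a_1+a_2+a_3=3$, and $\vol(\xi)\propto(a_1a_2a_3)^{-1}$, so by the arithmetic--geometric mean inequality the minimum is at $a_1=a_2=a_3=1$ --- a rational, indeed regular, Reeb field. For $X=\{x_1x_2+x_3^p+x_4^q=0\}$ the maximal torus is two-dimensional and the coordinate weights satisfy $a_1+a_2=p\,a_3=q\,a_4=:d$; since $\Omega=(dx_2\wedge dx_3\wedge dx_4)/x_2$ has charge $a_3+a_4$, the normalisation $a_3+a_4=(a_1+a_2)(\tfrac1p+\tfrac1q)=3$ forces $d=\tfrac{3pq}{p+q}$ and hence fixes $a_3=d/p$ and $a_4=d/q$. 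The Hilbert series
\[\frac{1-t^{d}}{(1-t^{a_1})(1-t^{a_2})(1-t^{a_3})(1-t^{a_4})}\]
gives $\vol(\xi)\propto(a_1a_2a_3a_4)^{-1}\propto\bigl(a_1(d-a_1)\bigr)^{-1}$, which for $0<a_1<d$ is minimised at $a_1=a_2=d/2$. So the Sasaki--Einstein Reeb field has weight ratios $a_1:a_2:a_3:a_4=pq:pq:2q:2p$ and is rational. In either case $\xi$ is rational, contradicting irregularity, and the corollary follows.

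The only substantial ingredient, beyond the classification already available, is the volume-minimisation characterisation of the Sasaki--Einstein Reeb field over the maximal torus; with that in hand the proof is just the two optimisations above, and the crucial point of the second one is that the symmetry of $\vol$ in $a_1$ and $a_2$ forces $a_1=a_2$ and hence rationality. One must be careful to run the whole argument with the maximal torus: for a proper subtorus the $\widehat{\vol}$-minimiser of the restricted Reeb cone need be neither the true Sasaki--Einstein Reeb field nor rational, so the content of the computation is precisely that, over the full Reeb cone, the minimiser lands at a rational point.
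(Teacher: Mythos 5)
Your proposal is correct, and its first half coincides with the paper's: both reduce, via Corollary~\ref{cor:sasaki-sphere} and Proposition~\ref{prop:isolated-factorial}, to the cases $\CC^3$ and the Brieskorn--Pham hypersurfaces $x_1x_2+x_3^p+x_4^q=0$. Where you diverge is in the second half: the paper simply cites \cite{collins2015sasaki} for the fact that Sasaki--Einstein structures on links of these hypersurfaces are quasi-regular, whereas you reprove this from scratch by Martelli--Sparks--Yau volume minimisation over the maximal torus. Your computation checks out: the charge normalisation $a_3+a_4=3$ with $a_1+a_2=pa_3=qa_4=d$ fixes $d=3pq/(p+q)$, the Hilbert series gives $\vol\propto\bigl(a_1(d-a_1)\bigr)^{-1}$, and the symmetry forces the unique minimiser $a_1=a_2=d/2$, hence a rational Reeb field with weights $pq:pq:2q:2p$; likewise AM--GM handles $\CC^3$ (a case the paper passes over silently, so your treatment is if anything more complete). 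Your closing caveat --- that the minimisation must be run over the maximal torus, since the vanishing of the Futaki invariant in all directions of $N_\RR$ is what pins $\hat\xi$ down as the unique critical point of the convex, proper volume functional on the slice $[\u=1]\cap\sigma$ --- is exactly the right point to flag, and it is the same mechanism the paper exploits in Example~\ref{exp:q-factorial-toric-2} and in Section~\ref{sec:hom-spheres}. The trade-off is that the paper's proof is two lines at the cost of an external reference, while yours is self-contained and makes the rationality of the minimiser visible as an explicit symmetry of the volume in $a_1,a_2$.
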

\begin{proof}
  By Corollary~\ref{cor:sasaki-sphere} all irregular Sasaki-Einstein structures on $S^5$ would arises as a link of a Brieskorn-Pham singularity as in Proposition~\ref{prop:isolated-factorial}~(\ref{item:bp-sing}). On the other hand, the Sasaki-Einstein structures on these manifolds are known to be quasi-regular, see \cite{collins2015sasaki}.
\end{proof}

\section{K-stability for Fano cone singularities}
\label{sec:k-stability-fano}
In this section we follow mostly the notation of \cite{zbMATH06868031} and \cite{collins2015sasaki}. Again we consider the weight decomposition of the affine coordinate ring under the action of the torus $T$.
\[\CC[X] = R = \bigoplus_{u \in \sigma^\vee \cap M} R_u.\]

For $\xi$ from the interior of $\sigma$ the index character $F(\xi,t)$ is defined by
\[F(\xi,t)= \sum_{u \in \sigma^\vee \cap M} e^{-t\langle u, \xi \rangle} \dim_\CC  R_u.\]
We obtain a meromorphic expansion for $F(\xi,t)$ as follows
\begin{equation}
    F(\xi,t)=\frac{a_0(\xi)(n-1)!}{t^n} + \frac{a_1(\xi)(n-2)!}{t^{n-1}} + O(t^{2-n}).\label{eq:index-char}
  \end{equation}

\begin{definition}
  The coefficent $a_0(\xi)$ is called the \emph{volume} of $(X,\xi)$ and will be denoted by $\vol(\xi)$.
\end{definition}
  
\begin{definition}
  The Futaki invariant for the pair $(X,\xi)$ and an element $v \in N_\RR$ is defined as follows
  \begin{equation}
    \label{eq:futaki-invariant}
    \Fut_\xi(X,v) = \frac{a_0(\xi)}{n-1}D_{-v}\left(\frac{a_1}{a_0}\right)(\xi)
+\frac{a_1(\xi)D_{-v}a_0(\xi)}{n(n-1)a_0(\xi)},
  \end{equation}
where $D_v$ denotes the directional derivative, i.e.
\[D_{-v} a_i(\xi)= \left.\frac{d}{ds}\right|_{s=0}\!\!\!\!a_i(\xi-sv).\]
\end{definition}

\begin{definition}
  A $T$-equivariant \emph{special degeneration} of a polarised Fano cone singularity $(X,\xi)$ is a pair $(\X,v)$ of a family $\X \to \CC$ with a $T$-action along the fibres and a $\CC^*$-action $v$ covering the standard $\CC^*$ on the base, such that
  \begin{enumerate}
  \item the generic fibres are isomorphic to $X$,
  \item the special fibre $\X_0$ is normal,
  \item both actions commute.
  \end{enumerate}
  If $\X \cong X \times \CC^*$, then the special degneration is called \emph{trivial}. 
\end{definition}
In the situation of a special degeneration as above there is a torus $T'$ acting on $Y$ generated by the commuting actions of $T$ and $\CC^*$. The embedding of tori induces an embedding of the corresponding groups of co-characters and their associated vector spaces. $N_\RR \hookrightarrow N_\RR'$. With this $(\X_0,\xi)$ becomes a polarised Fano cone singularity and the $\CC^*$-action $v$ can be seen as an element of $N'$.

\begin{definition}
  A polarised Fano cone singularity $(X, \xi)$ is called \emph{K-stable} if for every $T$-equivariant degeneration $(\X,v)$ with special fibre $Y$ one has $\Fut_\xi(\X_0,v) \geq 0$ with equality if and only if $\X \cong X \times \CC^*$. 
\end{definition}

The importance of the notion of K-stability for the study of Sasaki-Einstein structures comes from the following theorem by Collins and Sz\'ekelyhidi.

\begin{theorem}[{\cite[Thm. 1.1]{collins2015sasaki}}]
\label{thm:k-stab-Einstein}
  The K-stability of $(X,\xi)$ is equivalent to the existence of a Ricci flat K\"ahler cone metric on $(X,\hat \xi)$ for an appropriate rescaling $\hat \xi$ of $\xi$ or equivalently the to the existence of a Sasaki-Einstein structure with Reeb field $\hat \xi$ on the link of $X$.
\end{theorem}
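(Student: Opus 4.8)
\emph{Sketch of the proof strategy.} This is the main theorem of \cite{collins2015sasaki}; in the body of the present paper it is simply invoked, but the plan of proof is the following Yau--Tian--Donaldson-type argument, modelled on the Chen--Donaldson--Sun scheme and adapted to Fano cone singularities. One establishes the two implications separately: the comparatively soft direction that a Ricci-flat K\"ahler cone metric forces K-semistability (with a polystability refinement for the equality case), and the substantially harder converse that K-stability produces the metric. Along the way the ``appropriate rescaling'' $\hat\xi$ is pinned down: by the volume-minimisation principle of Martelli--Sparks--Yau the function $\vol$ is strictly convex on the Reeb cone, so its minimiser on the relevant affine slice through $\xi$ fixes the direction of $\hat\xi$ (at which $D_v\vol(\hat\xi)=0$ for all torus directions $v$), and the overall scale is then fixed so that the Einstein constant has the prescribed value.

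For the direction ``metric $\Rightarrow$ K-stable'', given such a metric and a $T$-equivariant special degeneration $(\X,v)$, the key step is to interpret $\Fut_{\hat\xi}(\X_0,v)$ as the asymptotic slope of a Ding-type energy functional along the geodesic ray in the space of transverse K\"ahler potentials on the link determined by the degeneration. Since this functional is convex along geodesics and the Sasaki--Einstein metric is its minimiser, the slope is $\geq 0$, which gives $\Fut_{\hat\xi}(\X_0,v)\geq 0$; and if it vanishes the ray must be trivial, forcing $\X \cong X\times\CC^*$. The same convexity machinery yields uniqueness of the metric, which is needed both here and for well-posedness below.

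For the direction ``K-stable $\Rightarrow$ metric'' I would run a continuity method connecting a reference transverse K\"ahler metric to the transverse complex Monge--Amp\`ere (Sasaki--Einstein) equation, for instance via an Aubin--Yau-type path or the Sasaki--Ricci flow. Openness at each parameter follows from the implicit function theorem, the linearised operator being invertible once one works $T$-equivariantly so that holomorphic vector fields tangent to the torus cause no obstruction. The crux is closedness: if the path breaks at some parameter $t_*<1$, the approximate metrics degenerate and one extracts a pointed Gromov--Hausdorff limit. Using Cheeger--Colding theory together with a partial $C^0$-estimate in the cone setting (after Donaldson--Sun), the limit is a normal affine variety carrying a K\"ahler cone metric with klt singularities and a torus action containing the Reeb flow; a two-step degeneration argument (first to the metric tangent cone at the vertex, then, if necessary, further) exhibits it as a $T$-equivariant special degeneration $(\X,v)$ of $X$ with induced polarisation. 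Either this limit is $X$ with its metric, in which case the path continues, or it is a nontrivial degeneration for which one computes $\Fut_{\hat\xi}(\X_0,v)\le 0$ with strict inequality, contradicting K-stability. Hence the path reaches $t=1$ and produces the desired metric.

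The main obstacle is this closedness step: establishing the partial $C^0$-estimate for degenerating transverse K\"ahler--Einstein metrics on links, identifying the Gromov--Hausdorff limit with an honest algebraic special degeneration while correctly tracking the Reeb field through the limit, and controlling volumes precisely enough to make the sign of the Futaki invariant of the limiting degeneration rigorous. A secondary subtlety is that the rescaling $\hat\xi$ is governed by the finite-dimensional volume minimisation over the Reeb cone, so the infinite-dimensional PDE must be interleaved with this optimisation, and the convexity and uniqueness results are needed throughout, in particular for the equality case in the first direction.
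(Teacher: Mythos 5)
This statement is not proved in the paper at all: it is quoted verbatim from Collins--Sz\'ekelyhidi \cite{collins2015sasaki} and simply invoked as a black box, so there is no internal proof to compare against. Your sketch is a fair outline of the actual argument in the cited reference (the two separate implications, the Ding/energy-functional slope interpretation of the Futaki invariant for the easy direction, and for the hard direction a continuity method with a partial $C^0$-estimate, Cheeger--Colding limits identified as normal special degenerations, and a sign contradiction with K-stability). One point you should correct: the ``appropriate rescaling'' $\hat\xi$ is a positive scalar multiple of the given $\xi$ (normalising $\langle \u,\xi\rangle$ so that the Einstein constant comes out right), not a new direction selected by volume minimisation; the Martelli--Sparks--Yau volume minimisation enters instead as a \emph{necessary condition} for K-stability --- the Futaki invariant of product configurations generated by the torus must vanish, i.e.\ $D_v\vol(\hat\xi)=0$ for $v\in\u^\perp$ --- which is exactly how it is exploited later in this paper. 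With that adjustment your sketch matches the strategy of the cited proof.
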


\section{Fano cone singularities from polyhedral divisors}
\label{sec:polyhedral-divisors}
We shortly recall the description of torus actions on affine varieties from \cite{pre05013675}. We keep the notation from Secion~\ref{sec:spheres}, i.e. $T$ is a fixed algebraic torus of dimension $n$, $M$ and $N$ are its character lattice and the dual lattice, respectively, $M_\RR$ and $N_\RR$ denote the corresponding $\RR$-vector spaces.

In general a \emph{polyhedral divisor} with rational polyhedral \emph{tail cone} $\sigma \subset N_\RR$ on a semi-projective normal variety $Y$ is a formal sum
\[
\D = \sum_{Z} \D_Z \cdot Z.
\]
with $Z$ running over the prime divisors $Z \subset Y$ and $\D_Z$ being rational polyhedra in $N_\QQ$ with tail cone $\sigma$, i.e. $\sigma = \{v \in N_\RR \mid v+\D_Z \subset D_Z\}$ for all $Z$. For all but finitely many $Z$ the polyhedral coefficients $\D_Z$ coincide with the tail cone $\sigma$. We call these coefficients trivial. By the \emph{support} of $\D$ we mean the union of all $Z$, where $\D_Z$ is non-trivial. Note that in concrete examples we usually write down explicitly only the summands with non-trivial coefficients (as it is done for ordinary divisors).

For every polyhedral coefficient $\D_Z$ we denote its set of vertices by $\D_Z^{(0)}$. The \emph{multiplicity} of a vertex $v \in \D_Z^{(0)}$ is defined to be the minimal natural number $\mu(v) \in \NN$ such that $\mu(v)v$ becomes a lattice element.

A polyhedral divisor can be \emph{evaluated} at elements $u \in \sigma^\vee \cap M$ as follows
\[
\D(u) = \sum_{Z} \big(\min_{v \in \D_Z} \langle u , v \rangle\big)\cdot Z.
\]
The result is an ordinary $\QQ$-divisor on $Y$. A polyhedral divisor is called \emph{proper} if the following conditions are fulfilled 
\begin{enumerate}
\item $\D(u)$ is Cartier for every $u \in \sigma^\vee \cap M$,
\item $\D(u)$ is semi-ample for every $u \in \sigma^\vee \cap M$,
\item $\D(u)$ is big for every $u$ from the interior of $\sigma^\vee$.
\end{enumerate}

A proper polyhedral divisor defines an affine variety $X=\X(\D)$ of dimension $\dim Y + \dim T$ via
\[
\CC[X] = R = \bigoplus_{u \in \sigma^\vee \cap M} H^0(Y, \CO(\lfloor \D(u)\rfloor)).
\]
Moreover, the natural $M$-grading induces an effective $T$-action on $X$. By \cite{pre05013675} every normal affine variety with torus action can be obtained by this construction. From the construction it is clear that $R_0 = \CC$ if and only if $Y$ is actually projective and that the Reeb cone is given as the interior if $\sigma$. We see that the dimension of $Y$ equals the complexity of the torus action.

We now specialise to the case of Fano cone singularities of complexity $1$. Here, $Y$ has to be a curve. Moreover, by \cite[Cor. 5.8.]{tsing} log-terminality implies that $Y \cong \PP^1$. In this situation the conditions above simplify drastically. Given a polyhedral divisor $\D = \sum_{y \in \PP^1}\D_y \cdot y$ its properness corresponds simply to the condition 
\[\deg(\D) := \sum_{y} \D_y \subsetneq \tail \D.\]
Here, the summation over the polyhedral coefficients uses the Minkowski addition of polyhedra.

Let $\D$ be a proper polyhedral divisor on $\PP^1$ and assume that $\D$ is supported in $y_1, \ldots ,y_r \in \PP^1$. Then by \cite[Prop. 4.4]{tsing} the $\QQ$-Gorenstein condition is equivalent to the fact that the following system of linear equations has a (unique) solution $(a_{y_1}, \ldots a_{y_r}, \u) \in \QQ^r \times M_\QQ$.
\begin{align}
  \forall_{i=1,\ldots, r}\forall_{v \in \D_{y_i}^{(0)}} \colon \langle \u, v \rangle &= a_{y_i} - \frac{\mu(v)-1}{\mu(v)},\label{eq:canonical-weight-vertical}\\
  \forall_{\text{rays }\rho \subset (\tail \D \setminus \deg \D)} \colon \langle \u, v_\rho \rangle& = 1,\nonumber \\
  \sum_{i=1}^r a_{y_i} &=2 \nonumber
\end{align}
Here, $\rho$ runs over the rays of the tail cone, which do no intersect the degree polyhedron $\deg \D$ and $v_\rho$ denotes the primitive lattice generator of $\rho$.

By \cite[Cor. 5.8]{tsing} for $\X(\D)$ being log-terminal is equivalent to the fact that 
\begin{equation}
  \label{eq:log-terminal}
  \sum_{y\in \PP^1} \left(1-\frac{1}{\max \{\mu(v) \mid v \text{ is a vertex of } \D_y\}}\right) < 2.
\end{equation}
As a consequence of this condition $\u$ lies in the interior of the weight cone $(\tail \D)^\vee$. 

A polyhedral divisor $\D$ on $\PP^1$ fulfilling the above conditions defines a Fano cone singularity $\X(\D)$ and every Fano cone singularity of complexity $1$ arises this way. The Reeb cone is given as the interior of $\tail(\D)$.

\begin{definition}
  We call this uniquely determined element $\u \in M_\QQ$ from (\ref{eq:canonical-weight-vertical}) the \emph{canonical weight} of $\D$.  Similarly, for a toric $\QQ$-Gorenstein singularity given by a cone $\sigma \subset N_\RR$ the \emph{canonical weight} is defined to be the element $\u \in M_\RR$, such that $\langle \u, v_\rho \rangle = 1$ for all primitive generators $v_\rho$ of rays $\rho$ of $\sigma$. 

Infact, in both cases $\u$ is the weight of a generator of the canonical module $\omega_R$ of $R=\CC[X]$.
\end{definition}

Following \cite{is17} we are now going to describe equivariant special degenerations of $\X(\D)$ in terms of the polyhedral divisor $\D$.

\begin{definition}
  Given a polyhedral divisor $\D$ on $\PP^1$ then a choice $y \in \PP^1$ is called admissible if for every $u \in (\tail \D)^\vee \cap M$ one has $\min_{v \in \D_z} \langle u , v \rangle \notin \ZZ$ for at most one $z \neq y$.
\end{definition}

\begin{proposition}[{\cite[Thm. 4.3]{is17}}]
\label{prop:degenerations}
  The special fibre of a non-trivial equivariant special degenerations of $\X(\D)$ is an affine toric variety corresponding to a cone 
\[\sigma_y= \pos\left(\left(\tail \D\times \{0\}\right)\; \cup\; \left(\D_y \times \{1\}\right)\; \cup \; \left(\left({\textstyle \sum_{z \neq y} \D_y} \right) \times \{-1\}\right) \right).\]
Where $y \in \PP^1$ is admissible. The induced $\CC^*$-action is given by an element of $N \times \ZZ_{< 0}$.
\end{proposition}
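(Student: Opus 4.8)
The plan is to classify the non-trivial $T$-equivariant special degenerations of $X = \X(\D)$ combinatorially, in two steps: (a) show that every such degeneration is realised by a one-parameter degeneration of the pointed curve $(\PP^1; y_1,\dots,y_r)$ in which all marked points but one collide, and (b) compute the resulting flat limit and recognise it as the affine toric variety attached to the cone $\sigma_y$. The reference \cite{is17} being the source, I only indicate how the pieces fit together.

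For the construction, fix an admissible point $y$ and choose a one-parameter subgroup $\lambda\colon \CC^*\to\Aut(\PP^1)$ whose two fixed points are $y$ and an auxiliary point $y_0$, oriented so that all marked points different from $y$ flow to $y_0$ as the parameter tends to $0$. Forming the family over $\CC$ obtained from $\D$ by letting the marked points move along $\lambda$, the generic fibre is $\X(\D')$ for a pointed curve abstractly isomorphic to $(\PP^1;y_1,\dots,y_r)$, hence isomorphic to $X$; the $T$-action is along the fibres, and $\lambda$ covers the standard $\CC^*$ on the base. The central fibre is the complexity-one $T$-variety associated with the polyhedral divisor on $\PP^1$ supported at the \emph{two} points $y$ and $y_0$, with coefficients $\D_y$ at $y$ and $\sum_{z\neq y}\D_z$ at $y_0$. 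This is exactly where admissibility of $y$ is used: it ensures that for every $u\in(\tail\D)^\vee\cap M$ the round-down of the value "evaluated all at once'' at $y_0$ agrees with the sum of the individual round-downs, so that no correction terms are produced, no chain of $\PP^1$'s is forced, and the naive flat limit is already normal.

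It remains to recognise this two-point model as toric with the stated cone. A polyhedral divisor on $\PP^1$ supported in at most two points, say $0$ and $\infty$, with tail cone $\sigma$ and coefficients $\Delta_0,\Delta_\infty$, describes a toric variety (this is classical, going back to \cite{pre05013675}): the residual $\CC^*$ on $\PP^1$ fixing $0$ and $\infty$ lifts to $\X(\D)$ and enlarges $T$ to a torus of rank $\dim T + 1$, which matches the dimension $\dim\PP^1+\dim T=\dim T+1$ of the variety, so the central fibre is genuinely toric. Writing $a_0(u)=\min_{v\in\Delta_0}\langle u,v\rangle$ and $a_\infty(u)=\min_{v\in\Delta_\infty}\langle u,v\rangle$ and using the monomial basis of $H^0(\PP^1,\CO(\lfloor a_0(u)\rfloor\cdot 0+\lfloor a_\infty(u)\rfloor\cdot\infty))$, one reads off the $M\times\ZZ$-graded weight monoid of $\CC[\X(\D)]$ and, dualising, obtains the cone $\pos\big((\sigma\times\{0\})\cup(\Delta_0\times\{1\})\cup(\Delta_\infty\times\{-1\})\big)$; with $\Delta_0=\D_y$ and $\Delta_\infty=\sum_{z\neq y}\D_z$ this is precisely $\sigma_y$. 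The negative sign in the last coordinate of $v$ is dictated by the orientation chosen above: the degeneration parameter tends to $0$, so the residual $\CC^*$ contracts towards $y_0$, whose coefficient was placed at level $-1$. Commutativity of the $T$-action and $\lambda$, and normality of the central fibre, then follow from the construction, so the pair is indeed a special degeneration.

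The genuinely non-formal input is part (a): that \emph{every} non-trivial equivariant special degeneration arises from this collision-of-marked-points picture — equivalently, that imposing normality of the central fibre forces the combinatorial type of the limit to be one of the two-point divisors above — and a careful account of how the word "non-trivial'' rules out both the case where the extra $\CC^*$ is absorbed into $T$ (leaving $\X_0\cong X$) and the cases producing reducible or non-normal limits. This is where one must invoke the classification of $T$-equivariant deformations and degenerations of complexity-one affine $T$-varieties; once that is granted, Step (b) and the verification of the special-degeneration axioms are routine bookkeeping.
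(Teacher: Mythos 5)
The paper offers no proof of this proposition at all: it is imported verbatim as \cite[Thm.~4.3]{is17}, so there is no in-paper argument to compare yours against. Your sketch of the constructive half is correct and matches the argument in that reference: colliding all marked points except the admissible $y$ along a one-parameter subgroup of $\Aut(\PP^1)$, using admissibility exactly to guarantee $\sum_{z\neq y}\lfloor\min_{v\in\D_z}\langle u,v\rangle\rfloor=\lfloor\sum_{z\neq y}\min_{v\in\D_z}\langle u,v\rangle\rfloor$ (so the graded Hilbert function is constant and the limit is the naive one), and then reading off that a polyhedral divisor on $\PP^1$ supported at two points with coefficients $\Delta_0,\Delta_\infty$ has weight monoid dual to $\pos\bigl((\sigma\times\{0\})\cup(\Delta_0\times\{1\})\cup(\Delta_\infty\times\{-1\})\bigr)$, which is $\sigma_y$. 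The one substantive point you leave open is the completeness direction --- that \emph{every} non-trivial equivariant special degeneration arises from such a collision --- and you correctly identify this as the place where the classification of equivariant degenerations of complexity-one $T$-varieties from \cite{is17} must be invoked; since the paper itself proves nothing here and simply cites that theorem, your proposal is at the same level of self-containedness as the source, and is acceptable as a justification of the statement provided the citation is retained for that step.
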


\begin{remark}
  If $y \in \PP^1$ is an admissible choice with respect to $\D$ then the canonical weight $\u_y$ for $\sigma_y$ has the form $\u_y=(\u,a_y+1)$, where $\u \in M_\RR$ is the canonical weight for $\D$ and $a_{y_i}$ is as defined and $a_y := 0$ if $y \notin \{y_1, \ldots, y_r\}$.
\end{remark}

For a cone $\sigma \subset N_\RR$ consider the following polytope obtained by truncation of the dual cone
\[
\sigma^\vee(\xi)=\left\{u \in \sigma^\vee \mid \langle u, \xi \rangle \leq 1 \right\}.
\]

By \cite{zbMATH05159638} we can use this to calculate the volume of $\xi$.
\begin{proposition}
\label{prop:vol-toric}
  For a $\QQ$-Gorenstein toric Fano cone singularity. The volume functional $\vol: N_\RR \to \RR$ is given by
\[\vol(\xi) = \vol \sigma^\vee(\xi)\]
\end{proposition}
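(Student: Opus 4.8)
The plan is to reduce the proposition, via the combinatorial description of toric singularities, to the elementary asymptotics of a lattice-point generating function. The key point is that for a toric cone singularity the weight decomposition is multiplicity-free: $\dim_\CC R_u = 1$ for every $u$ in the weight monoid $\sigma^\vee \cap M$, and $R_u = 0$ otherwise. Hence the index character is simply
\[F(\xi,t) = \sum_{u \in \sigma^\vee \cap M} e^{-t\langle u, \xi\rangle},\]
the Laplace transform (in $t$) of the counting function $N(s) = \#\{u \in \sigma^\vee \cap M \mid \langle u, \xi\rangle \le s\}$, and the whole statement becomes the assertion that the leading Laurent coefficient of this series at $t = 0$ is, up to the normalisation fixed in (\ref{eq:index-char}), $\vol\sigma^\vee(\xi)$.

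Next I would compute that leading coefficient. One clean route is to first record the scaling compatibility of the two sides: $F(\lambda\xi,t) = F(\xi,\lambda t)$ forces $a_0(\lambda\xi) = \lambda^{-n}a_0(\xi)$, and $\sigma^\vee(\lambda\xi) = \tfrac1\lambda\sigma^\vee(\xi)$ gives $\vol\sigma^\vee(\lambda\xi) = \lambda^{-n}\vol\sigma^\vee(\xi)$; since $a_0$ is continuous on the Reeb cone (it is even smooth, being differentiated in (\ref{eq:futaki-invariant})), it suffices to treat rational $\xi$, and after rescaling $\xi \in N$ primitive. For such $\xi$ the linear form $\langle\,\cdot\,,\xi\rangle$ descends to a $\ZZ_{\ge 0}$-grading of $R$ with $\dim R^{(k)} = \#\{u \in \sigma^\vee \cap M \mid \langle u,\xi\rangle = k\}$, so $F(\xi,t) = \sum_{k\ge0}\dim R^{(k)}\,e^{-tk}$ is an honest Hilbert series of a finitely generated graded $\CC$-algebra of Krull dimension $n$. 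Its pole of order $n$ at $t=0$ is governed by the multiplicity of $R$, which by Ehrhart's theorem equals the lattice-normalised volume of the slice $\sigma^\vee\cap\{\langle\,\cdot\,,\xi\rangle=1\}$; this slice volume is in turn a fixed constant times $\vol\sigma^\vee(\xi)$ through the pyramid volume formula for $\sigma^\vee(\xi) = \conv(\{0\}\cup\text{slice})$. Comparing with the normalisation in (\ref{eq:index-char}) then gives $\vol(\xi) = \vol\sigma^\vee(\xi)$. Equivalently, and bypassing the rationality reduction, one can argue analytically: from the lattice-point estimate $N(s) \sim \vol\sigma^\vee(\xi)\cdot s^n$ together with $F(\xi,t) = \int_0^\infty e^{-ts}\,dN(s)$, a Tauberian/Laplace argument extracts the leading term directly; this is essentially the computation carried out in \cite{zbMATH05159638}.

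The conceptual content here is small, so I expect the only delicate part to be the normalisation bookkeeping — keeping straight the factorials in (\ref{eq:index-char}), the difference between Euclidean and lattice-normalised volumes of the slice, and the factor from the cone volume formula, so that the constants genuinely line up. None of this is hard, but it is where errors creep in, and for the precise constants I would lean on \cite{zbMATH05159638}.
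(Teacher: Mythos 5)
Your argument is correct. Note that the paper does not actually prove this proposition --- it is simply quoted from \cite{zbMATH05159638} --- so you are supplying a proof where the paper gives only a citation, and the one you sketch is the standard one: multiplicity-freeness of the toric coordinate ring turns the index character into the lattice-point generating function of $\sigma^\vee\cap M$, whose leading singularity at $t=0$ is governed by the volume of the truncation $\sigma^\vee(\xi)$. Two small remarks. First, the step at the end is an Abelian (not Tauberian) argument --- you pass from the known asymptotics $N(s)\sim \vol\sigma^\vee(\xi)\, s^n$ to the asymptotics of its Laplace transform, which is the easy direction and needs no Tauberian hypothesis. Second, you can bypass both the rationality reduction and the Ehrhart input entirely: comparing the sum $\sum_{u\in\sigma^\vee\cap M}e^{-t\langle u,\xi\rangle}$ with the integral $\int_{\sigma^\vee}e^{-t\langle u,\xi\rangle}\,du = t^{-n}\,n!\,\vol_{\mathrm{Leb}}\sigma^\vee(\xi)$ (the elementary cone identity, valid for all $\xi$ in the Reeb cone) gives the leading Laurent coefficient in one line, and the error between sum and integral is $O(t^{1-n})$ by a standard estimate on lattice points near the boundary. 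Your instinct that the only delicate point is the normalisation is right: with the paper's convention of normalised lattice volume (Lebesgue volume times $n!$) and the $(n-1)!$ in \eqref{eq:index-char}, the constants differ from the naive ones by a fixed factor, but since the proposition is only ever used to locate critical points and signs of derivatives of $\vol$, this factor is harmless.
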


Similarly we have a statement in the situation of a complexity-$1$ torus action.
\begin{proposition}
\label{prop:vol-pdiv}
  For a $\QQ$-Gorenstein Fano cone singularity, given by a polyhedral divisor $\D$ on $\PP^1$. The volume functional $\vol: N_\RR \to \RR$ is given by
\[\vol(\xi) = \vol \sigma_y^\vee(\xi,0) := \vol \sigma_y^\vee((\xi,0)),\]
for any $y \in \PP^1$.
\end{proposition}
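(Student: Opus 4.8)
The plan is to compare, $T$-grading piece by $T$-grading piece, the index character of $\X(\D)$ with that of the toric variety $\X(\sigma_y)$ from Proposition~\ref{prop:degenerations}, and then to read off the volume of the latter from Proposition~\ref{prop:vol-toric}. This is the combinatorial shadow of the fact that the volume is insensitive to the equivariant flat degenerations of Proposition~\ref{prop:degenerations}; carrying it out directly has the advantage of working for \emph{every} $y\in\PP^1$, admissible or not, which is also what forces the answer to be independent of $y$.

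For $u\in\sigma^\vee\cap M$ put $h_y(u):=\min_{v\in\D_y}\langle u,v\rangle$ (a minimum attained at a vertex of $\D_y$, since $u\in\sigma^\vee$ and $\D_y$ has tail cone $\sigma$), so that $\deg\D(u)=\sum_y h_y(u)$; this function is concave, piecewise linear, positively homogeneous of degree $1$, and nonnegative on $\sigma^\vee$ because $\deg\D\subseteq\sigma$. From $R_u=H^0(\PP^1,\CO(\lfloor\D(u)\rfloor))$ and Riemann--Roch on $\PP^1$ one gets $\dim_\CC R_u=\deg\D(u)+O(1)$, the error being bounded uniformly in $u$ by a constant depending only on the number of divisors in the support of $\D$. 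On the other hand, the description of $\sigma_y$ together with $\min_{v\in P+Q}\langle u,v\rangle=\min_{v\in P}\langle u,v\rangle+\min_{v\in Q}\langle u,v\rangle$ gives
\[
\sigma_y^\vee=\bigl\{(u,s)\mid u\in\sigma^\vee,\ -h_y(u)\le s\le{\textstyle\sum_{z\neq y}}h_z(u)\bigr\},
\]
and since $\xi\in\relint\sigma$ we have $(\xi,0)\in\relint\sigma_y$, so that $\sigma_y^\vee(\xi,0)$ is a bounded polytope; for each fixed $u\in\sigma^\vee\cap M$ the admissible $s$ fill an interval of length $h_y(u)+\sum_{z\neq y}h_z(u)=\deg\D(u)$, hence contain $\deg\D(u)+O(1)$ lattice points. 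Thus $\dim_\CC R_u$ and the number of $(u,s)\in\sigma_y^\vee\cap(M\times\ZZ)$ lying over $u$ both equal $\deg\D(u)+O(1)$, so they differ by $O(1)$ uniformly in $u\in\sigma^\vee\cap M$.

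Multiplying this pointwise estimate by $e^{-t\langle u,\xi\rangle}$, summing over $u\in\sigma^\vee\cap M$, and using that $\sum_{u\in\sigma^\vee\cap M}e^{-t\langle u,\xi\rangle}=O(t^{-(n-1)})$ as $t\to0^+$ (up to normalisation this is the index character of an $(n-1)$-dimensional affine toric variety), I would conclude that $F_{\X(\D)}(\xi,t)$ and $F_{\X(\sigma_y)}((\xi,0),t)$ differ by $O(t^{1-n})$. Comparing the coefficients of $t^{-n}$ in the expansion~\eqref{eq:index-char} then yields $\vol_{\X(\D)}(\xi)=\vol_{\X(\sigma_y)}(\xi,0)$, and Proposition~\ref{prop:vol-toric} applied to the toric cone singularity $\X(\sigma_y)$ identifies the right-hand side with $\vol\sigma_y^\vee(\xi,0)$. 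As the left-hand side does not involve $y$, the right-hand side cannot either, which is exactly the assertion.

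The step I expect to need the most care is this last asymptotic one: one must verify that passing from $\sum_u e^{-t\langle u,\xi\rangle}\dim_\CC R_u$ to the corresponding lattice-point sum for $\X(\sigma_y)$ perturbs only the subleading coefficients $a_1,a_2,\dots$ and not $a_0$. This comes down to the bound on $\sum_{u\in\sigma^\vee\cap M}e^{-t\langle u,\xi\rangle}$ used above, which one can either extract from the complexity-$0$ asymptotics underlying Proposition~\ref{prop:vol-toric} or obtain by a direct Ehrhart-type lattice-point count. For \emph{admissible} $y$ the comparison can be bypassed altogether: Proposition~\ref{prop:degenerations} provides a $T$-equivariant flat degeneration of $\X(\D)$ onto $\X(\sigma_y)$, along which the Hilbert function $u\mapsto\dim_\CC R_u$, and hence $F(\xi,\cdot)$ and its leading coefficient, is preserved, after which Proposition~\ref{prop:vol-toric} finishes the argument; the hands-on computation is needed precisely to cover the non-admissible $y$, for which the statement is nonetheless claimed.
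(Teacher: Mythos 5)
Your proof is correct, but it takes a genuinely different and more self-contained route than the paper. The paper's own proof is essentially a citation: it either invokes the invariance of the asymptotics of $\dim_\CC R_u$ under the equivariant flat degenerations of Proposition~\ref{prop:degenerations} (which, as you observe, directly covers only admissible $y$), or it identifies $\sigma_y$ with the cone $\hat\sigma$ attached to an $(N\times\ZZ)$-valued valuation via \cite[Cor.~5.5]{im17} and then applies the volume formula of \cite[Lem.~3.12]{li2017stability}. You instead compare the two index characters term by term: Riemann--Roch on $\PP^1$ gives $\dim_\CC R_u=\deg\D(u)+O(1)$ uniformly in $u$, your description of $\sigma_y^\vee$ as $\{(u,s)\mid u\in\sigma^\vee,\ -h_y(u)\le s\le\sum_{z\ne y}h_z(u)\}$ gives the same count $\deg\D(u)+O(1)$ for the lattice points of $\sigma_y^\vee$ lying over $u$, and the accumulated error is dominated by the index character of the $(n-1)$-dimensional cone $\sigma^\vee\subset M_\RR$, hence is $O(t^{1-n})$ and cannot disturb $a_0$. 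This buys a proof valid for every $y$, admissible or not, with the independence of $y$ visible for free and without any valuation machinery. Two small points are worth making explicit in a final write-up: first, $(\xi,0)$ lies in the interior of $\sigma_y$ because the only element of $\sigma_y^\vee$ lying over $u=0$ is the origin, so $(\X(\sigma_y),(\xi,0))$ really is a polarised toric cone to which Proposition~\ref{prop:vol-toric} applies; second, the uniformity in $u$ of the $O(1)$ errors, which you have already noted (the Riemann--Roch error is bounded by the number of points in the support of $\D$, the interval lattice-point error by $1$).
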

\begin{proof}
  On the one hand, this can be derived from the general fact that the asymptotic behaviour of the dimension of the homogeneous components does not change under equivariant flat deformations. On the other hand, we can see this as a special case of \cite[Lem.~3.12]{li2017stability} which calculates $\vol(\xi)$ as $\vol \hat\sigma^\vee(\xi)$ for a certain cone $\hat \sigma$ constructed from a $(N\times \ZZ)$-valued valuation. Indeed, by \cite[Cor.~5.5]{im17} we have $\sigma_y = \hat \sigma$ for a particular choice of a such a valuation.
\end{proof}

Hence, the volume can be effectively calculated on the toric degenerations, discussed in Section~\ref{sec:polyhedral-divisors}.

\begin{example}[$\QQ$-factorial toric case]
\label{exp:q-factorial-toric-1}
  Consider the case of a full-dimensional simplicial cone $\sigma \subset N_\RR$. Let $u_1, \ldots, u_n \in M$ be the primitive lattice generators of the dual cone. Then
\begin{equation}
\vol(\xi) = \vol \sigma^\vee(\xi) =  \left|\det \left(\frac{u_1}{\langle u_1, \xi  \rangle},\ldots,\frac{u_n}{\langle u_n, \xi  \rangle}\right)\right|
= \frac{|\det(u_1,\ldots,u_n)|}{\langle u_1, \xi  \rangle \cdots \langle u_n, \xi  \rangle}.\label{eq:volume-simplicial}
\end{equation}
Note, that just for convenience here and in the following we consider the normalised lattice volume. This differs from the Lebesgue volume only by the constant factor $n!$.
\end{example}

\medskip 

 Assume that $(Y,\xi)$ is a polarised Fano cone singularity with canonical weight $\u$ and $\langle \u, v \rangle = 0$. Then according to \cite[Sec. 6.2]{collins2015sasaki} and \cite[Sec.~2.5]{li2017stability}  we have
\begin{equation}
  \Fut_\xi(Y,v)= D_{-\hat v}\vol(\hat \xi) \label{eq:futaki-projection},
\end{equation}
where $\hat \xi = \nicefrac{\xi}{\langle \u, \xi \rangle}$ and $\hat v$ 
is some positive recaling of $v- \frac{\langle \u, v \rangle}{\langle \u, \xi \rangle} \xi \;\in \u^\perp$.

Since in the toric case non-trivial equivariant special degenerations do not exist K-stability is equivalent to the condition that $D_{-v}\vol(\xi)$  vanishes for all $v \in \u^\perp$. On the other hand, it was observed by \cite{zbMATH05294705} that the volume functional is convex and proper on $[\u = 1] \cap \sigma$. As such it has a unique critical value. As it was pointed out in \cite{collins2015sasaki} in this way it is possible to recover the following result from \cite{zbMATH05682662}.

\begin{proposition}
  The link of a $\QQ$-Gorenstein toric singularity admits a Sasaki-Einstein structure.
\end{proposition}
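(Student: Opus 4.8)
The plan is to combine the three ingredients assembled just above: Collins--Sz\'ekelyhidi's Theorem~\ref{thm:k-stab-Einstein}, which reduces the existence of a Sasaki--Einstein structure on the link to the K-stability of $(X,\xi)$ for a suitable Reeb field $\xi$; the fact (stated in the paragraph preceding the Proposition) that for toric Fano cone singularities there are no non-trivial equivariant special degenerations, so that K-stability is equivalent to the vanishing of $D_{-v}\vol(\xi)$ for all $v \in \u^\perp$; and the convexity/properness of the volume functional on the slice $[\u=1]\cap\sigma$, observed in \cite{zbMATH05294705}. The strategy is therefore: (1) restrict the volume functional $\vol$ to the affine hyperplane section $S := \{\xi \in \relint\sigma \mid \langle \u,\xi\rangle = 1\}$, where $\u$ is the canonical weight; (2) invoke convexity and properness to obtain a unique minimiser $\xi_0 \in S$; (3) observe that at $\xi_0$ the directional derivative of $\vol$ vanishes in every direction tangent to $S$, i.e. for all $v$ with $\langle\u,v\rangle = 0$, which is exactly the set $\u^\perp$; (4) conclude via the toric specialisation of the Futaki invariant in \eqref{eq:futaki-projection} — or directly from the characterisation of K-stability in the toric case — that $(X,\xi_0)$ is K-stable, and hence by Theorem~\ref{thm:k-stab-Einstein} that the link admits a Sasaki--Einstein structure with Reeb field (a rescaling of) $\xi_0$.

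A few points need to be handled with a little care. First, one must make sure $\vol$ is actually finite and smooth on $\relint\sigma$: by Proposition~\ref{prop:vol-toric} it equals the Euclidean volume of the truncated dual cone $\sigma^\vee(\xi)$, which is a bounded polytope precisely because $\xi$ lies in the interior of $\sigma$, so $\langle u,\xi\rangle > 0$ on $\sigma^\vee\setminus\{0\}$; finiteness and smoothness on the interior are then standard (and $\vol$ is homogeneous of degree $-n$). Second, properness on the slice $S$: as $\xi$ approaches the boundary of $\sigma$ within $S$, some facet inequality $\langle u,\xi\rangle > 0$ degenerates and the polytope $\sigma^\vee(\xi)$ becomes unbounded, so $\vol(\xi)\to\infty$; this is what guarantees the minimum is attained in the interior rather than escaping to the boundary. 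Third, one should note that $\u$ lies in the interior of the weight cone $\sigma^\vee$ (this is recorded in the excerpt as a consequence of the log-terminality/$\QQ$-Gorenstein conditions), so the slice $S$ is a genuine $(n-1)$-dimensional polytope-like region meeting $\relint\sigma$, and the tangent space to $S$ at any point is exactly $\u^\perp$.

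The main obstacle — or rather the one place where I would lean most heavily on cited results — is the convexity of $\vol|_S$ together with the precise form of the Futaki invariant in the toric setting. Convexity of the volume on the slice is the substantive analytic input from \cite{zbMATH05294705} (in the toric case this can also be seen combinatorially, since $\vol(\xi)$ is, up to a constant, an integral of $\langle u,\xi\rangle^{-n}$ over a fixed polytope, and $t\mapsto t^{-n}$ is convex), and I would simply quote it. Strict convexity — needed for uniqueness of the critical point, though not strictly necessary for mere existence — follows from full-dimensionality of $\sigma^\vee$. The identification of the critical point of $\vol|_S$ with a K-stable Reeb field is then immediate from \eqref{eq:futaki-projection}: for $v\in\u^\perp$ one has $\hat v$ proportional to $v$ and $\Fut_{\xi_0}(X,v) = D_{-\hat v}\vol(\hat\xi_0)$, which vanishes at the minimiser; non-negativity with equality holding trivially in the absence of non-trivial degenerations gives K-stability. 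Assembling these pieces is short, so the write-up is essentially a matter of citing Proposition~\ref{prop:vol-toric}, the convexity result, and Theorem~\ref{thm:k-stab-Einstein} in the right order.

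\begin{proof}
  By Proposition~\ref{prop:vol-toric} the volume functional is given on the interior of $\sigma$ by $\vol(\xi) = \vol\sigma^\vee(\xi)$. Since $\xi\in\relint\sigma$ forces $\langle u,\xi\rangle>0$ for all $u\in\sigma^\vee\setminus\{0\}$, the truncated dual cone $\sigma^\vee(\xi)$ is a bounded polytope, so $\vol$ is finite, smooth, and homogeneous of degree $-n$ on $\relint\sigma$.

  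Let $\u\in M_\RR$ be the canonical weight of the singularity; as noted above it lies in the interior of the weight cone $\sigma^\vee$, so the slice $S=\{\xi\in\relint\sigma\mid\langle\u,\xi\rangle=1\}$ is a non-empty open subset of an $(n-1)$-dimensional affine subspace with tangent space $\u^\perp$ at each of its points. By \cite{zbMATH05294705} the restriction $\vol|_S$ is strictly convex, and it is proper: as $\xi$ tends to $\partial\sigma$ within $S$, some facet inequality $\langle u,\xi\rangle>0$ degenerates, the polytope $\sigma^\vee(\xi)$ becomes unbounded, and $\vol(\xi)\to\infty$. Hence $\vol|_S$ attains its minimum at a unique point $\xi_0\in S$, and $D_{-v}\vol(\xi_0)=0$ for every $v\in\u^\perp$.

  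In the toric case non-trivial equivariant special degenerations do not exist, so $(X,\xi_0)$ is K-stable if and only if $\Fut_{\xi_0}(X,v)\ge0$ for all $v\in N_\RR$ with equality throughout. For $v\in\u^\perp$ the vector $\hat v$ in \eqref{eq:futaki-projection} is a positive rescaling of $v$ itself, so $\Fut_{\xi_0}(X,v)=D_{-\hat v}\vol(\hat\xi_0)=0$ by the previous paragraph; for $v$ proportional to $\xi_0$ the Futaki invariant vanishes as well by homogeneity. Thus $(X,\xi_0)$ is K-stable, and by Theorem~\ref{thm:k-stab-Einstein} the link of $X$ carries a Sasaki--Einstein structure with Reeb field a suitable rescaling of $\xi_0$.
\end{proof}
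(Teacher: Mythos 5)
Your proposal is correct and follows exactly the argument the paper gives (in the paragraph immediately preceding the proposition): absence of non-trivial equivariant special degenerations in the toric case, convexity and properness of $\vol$ on the slice $[\u=1]\cap\sigma$ yielding a unique critical point, and Theorem~\ref{thm:k-stab-Einstein} to conclude. You merely spell out the finiteness, properness and tangency details that the paper leaves implicit.
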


\begin{example}[$\QQ$-factorial toric case (continued)]
\label{exp:q-factorial-toric-2}
  Consider again the case of $X$ to be toric and $\QQ$-factorial, i.e. the corresponding cone $\sigma$ is spanned by an integral $N_\RR$-basis $v_1, \ldots, v_n$. Let us denote the elements of the dual basis by $u_1, \ldots, u_n$ of $M_\RR$. In the light of Example~\ref{exp:q-factorial-toric-1} we see that $D_{(v_j-v_i)}\vol(\xi)=0$ if and only if
\[0= \left.\frac{d}{ds}\right|_{s=0} \prod_k \langle u_k , \xi + s(v_i-v_j)\rangle.\]
Now, one calculates
\begin{align*}
0 = \left.\frac{d}{ds}\right|_{s=0} \prod_k \langle u_k , \xi + s(v_i-v_j)\rangle &= 
\prod_{k\neq i} \langle u_k, \xi \rangle - \prod_{k\neq j} \langle  u_k, \xi \rangle 
\end{align*}
Since we have $\langle  u_i, \xi \rangle > 0$ for all $i=1,\ldots, n$,  we may divide by
$\prod_{k\neq i,j} \langle  u_k, \xi \rangle$ and obtain $\langle  u_j, \xi \rangle - \langle u_i, \xi \rangle = 0$. Hence, $(X,\xi)$ is K-stable if and only if  $\langle  u_i, \xi \rangle = \langle u_j, \xi \rangle$, but this is equivalent to the fact that $\xi$ is a multiple of $\sum_{i=1}^n v_i$. It follows, that a toric Sasaki-Einstein structure on a $\QQ$-homology sphere is necessarily quasi-regular.
\end{example}

For the case of a Fano cone singularity $X=\X(\D)$ obtained from a polyhedral divisor $\D$ on $\PP^1$ we obtain the following theorem.

\begin{theorem}
\label{thm:kstab-pdiv}
  Given a polarised Fano cone singularity $(X,\xi)$ coming from a polyhedral divisor $\D$ with canonical weight $\u$. Then $(X,\xi)$ is K-stable if and only if 
\[
D_{(v,0)} \vol(\sigma_y^\vee(\hat \xi,0)) = 0
\]
for some $y \in \PP^1$ and all $v \in \u^\perp$ and
\[
D_{v'} \vol(\sigma_y^\vee(\hat \xi,0)) > 0
\]
for every admissible choice $y \in \PP^1$ and $v' \in \u_y^\perp \cap (N_\QQ \times \ZZ_{> 0}).$ 
\end{theorem}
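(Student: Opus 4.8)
The plan is to combine the Collins--Sz\'ekelyhidi criterion for K-stability (Theorem~\ref{thm:k-stab-Einstein} and the subsequent definition) with the combinatorial description of equivariant special degenerations from Proposition~\ref{prop:degenerations} and the volume formula of Proposition~\ref{prop:vol-pdiv}. By definition, $(X,\xi)$ is K-stable precisely when $\Fut_\xi(\X_0,v) \ge 0$ for every $T$-equivariant special degeneration $(\X,v)$, with equality forcing triviality. There are two kinds of degeneration to deal with separately: the trivial product degenerations $\X \cong X\times\CC^*$, which probe the Futaki invariant along $v\in N_\RR$ with $\langle\u,v\rangle=0$; and the genuinely non-trivial ones, which by Proposition~\ref{prop:degenerations} all arise from an admissible $y\in\PP^1$ and have special fibre the toric variety $\X_0$ attached to the cone $\sigma_y$, with $v\in N\times\ZZ_{<0}$.

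First I would treat the trivial degenerations. Here $\X_0 = X$ and we must express $\Fut_\xi(X,v)$ for $v\in\u^\perp$. By formula~(\ref{eq:futaki-projection}) (valid since the canonical weight $\u$ has $\langle\u,v\rangle=0$), up to a positive rescaling this equals $D_{-\hat v}\vol(\hat\xi)$ with $\hat\xi = \xi/\langle\u,\xi\rangle$ and $\hat v$ proportional to $v$. Using Proposition~\ref{prop:vol-pdiv} we may compute $\vol$ as $\vol\,\sigma_y^\vee(\hat\xi,0)$ for any fixed $y\in\PP^1$, and the directional derivative along $v\in\u^\perp$ corresponds to the derivative along $(v,0)$, which lies in $\u_y^\perp$ since $\u_y=(\u,a_y+1)$ and $(v,0)$ pairs trivially with it. Because the volume functional is convex and proper on the relevant slice (as recalled after Proposition~\ref{prop:vol-toric}, citing \cite{zbMATH05294705}), the vanishing of $D_{(v,0)}\vol(\sigma_y^\vee(\hat\xi,0))$ for all $v\in\u^\perp$ is both necessary (for the inequality $\ge 0$ to be non-strict, since a nonzero convex critical-point-free derivative would violate it in one direction) and, by convexity, locates the unique critical point; this gives the first displayed condition, and the choice of $y$ is immaterial by Proposition~\ref{prop:vol-pdiv}.

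Next I would treat the non-trivial degenerations. Fix an admissible $y$; the special fibre is the toric cone singularity $\sigma_y$ with canonical weight $\u_y=(\u,a_y+1)$, and the degeneration vector $v'$ lies in $N\times\ZZ_{<0}$. By the definition of K-stability we need $\Fut_\xi(\X_0,v') \ge 0$, and since the degeneration is non-trivial we in fact need strict inequality $>0$. Applying~(\ref{eq:futaki-projection}) again on the toric special fibre (where $\langle\u_y,v'\rangle$ need not vanish, so one subtracts off the $(\xi,0)$-component and rescales into $\u_y^\perp$), and using Proposition~\ref{prop:vol-toric} to identify the volume with $\vol\,\sigma_y^\vee$, one finds that positivity of the Futaki invariant is equivalent to $D_{w}\vol(\sigma_y^\vee(\hat\xi,0))>0$ for $w$ the projection of $v'$ into $\u_y^\perp$. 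Running over all non-trivial degenerations means running over all admissible $y$ and all $v'\in N\times\ZZ_{<0}$, but since $v'\mapsto -v'$ and overall rescaling do not affect the sign of the (homogeneous) directional derivative and only the class of $v'$ modulo $(\xi,0)$ and $\u_y^\perp$ matters, this reduces to demanding $D_{v'}\vol(\sigma_y^\vee(\hat\xi,0))>0$ for all $v'\in\u_y^\perp\cap(N_\QQ\times\ZZ_{>0})$, which is the second displayed condition. Combining the two parts gives the theorem.

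The main obstacle I anticipate is bookkeeping the sign and rescaling conventions carefully enough to see that the Futaki positivity condition for the non-trivial degenerations, in which $v'$ a priori ranges over $N\times\ZZ_{<0}$ together with an arbitrary positive rescaling into $\u_y^\perp$, is genuinely equivalent to the clean statement over $\u_y^\perp\cap(N_\QQ\times\ZZ_{>0})$ — in particular checking that replacing $v'$ by its $\u_y$-orthogonal projection and flipping the sign of the last coordinate is harmless, and that the element $(\xi,0)$ along which one projects is itself $\u_y$-orthogonal to nothing problematic. A secondary subtlety is justifying that it suffices to test the first condition for a single $y$ while the second must be tested for every admissible $y$; this is exactly the asymmetry between the trivial degeneration (where $\vol$ is computed on any toric model by Proposition~\ref{prop:vol-pdiv}) and the non-trivial ones (each of which produces a genuinely different special fibre $\sigma_y$).
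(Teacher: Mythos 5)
Your proposal is correct and takes precisely the route the paper intends: the theorem is stated there without a printed proof, as the direct assembly of the definition of K-stability with the classification of special degenerations in Proposition~\ref{prop:degenerations}, the volume formulas of Propositions~\ref{prop:vol-toric} and~\ref{prop:vol-pdiv}, the remark that $\u_y=(\u,a_y+1)$, and the projection formula~(\ref{eq:futaki-projection}) --- exactly the ingredients you combine, in the same way. The only blemish is your sentence claiming that $v'\mapsto -v'$ does not affect the sign of the directional derivative (it does, since $D_{-w}=-D_{w}$); the correct bookkeeping is simply that $\Fut_{(\xi,0)}(\X_0,v')=D_{-\hat v'}\vol$ where the projection $\hat v'$ of a degeneration vector $v'\in N\times\ZZ_{<0}$ along $(\xi,0)$ retains a negative last coordinate, so $-\hat v'$ lies in $\u_y^\perp\cap(N_\QQ\times\QQ_{>0})$ and, after clearing denominators and using homogeneity of $D_{w}\vol$ in $w$, your stated condition results.
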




\section{Non-existence of irregular Sasaki-Einstein metrics on rational homology $5$-spheres}
\label{sec:hom-spheres}
In this section we specialise to the case of three-dimensional isolated Fano cone singularities $X$, i.e. those admitting five-dimensional manifolds as their links. Let us assume that the link admits an irregular Sasaki-Einstein structure. Hence, the corresponding Reeb field  $\xi$  generates a torus $T$ of dimension at least $2$, which acts effectively on $X$. On the other hand, from the considerations in Example~\ref{exp:q-factorial-toric-2} we know, that $X$ cannot be toric. Hence, we are left with the case of a complexity-$1$ torus action.

Let $X=\X(\D)$ be a three-dimensional Fano cone singularity given by a polyhedral divisor $\D$. Recall that $\tail(\D) \subset N_\RR \cong \RR^2$ coincides with the closure of the Reeb cone and is necessarily full-dimensional. Then $\tail(\D)$ is bounded by exactly two rays $\rho^1$ and $\rho^2$ with primitive lattice generators $v_1$ and $v_2$. Consequently every polyhedral coefficient $\D_y$ has two facets of the form $v_y^1 + \rho^1$ and $v_y^2 + \rho^2$, where $v_y^1$ and $v_y^2$ may coincide. The other facets of $\D_y$ have the form of a line segment $\overline{vw}$.

In this case we have the following criterion for isolated singularities.
\begin{proposition}
\label{prop:isolated}
  Let $\D$ be a polyhedral divisor on $\PP^1$ with two-dimensional tail cone $\sigma=\pos(v_1,v_2) \subset M_\RR \cong \RR^2$. Then the corresponding three-dimensional affine $T$-variety $\X(\D)$ has an isolated singularity if and only if the following conditions hold

  \begin{enumerate}
  \item For every facet $\overline{vw}$ of polyhedral coefficient $\D_y$ the vectors $\mu(v)(v,1)$ and $\mu(w)(w,1)$ can be complemented to a lattice basis of $N \times \ZZ$,\label{item:isolated-compact-facet}
  \item for $i \in \{1,2\}$ and $\rho_i \cap \deg \D = \emptyset$ and $v_y^i + \rho^i$ being a facet of a polyhedral coefficient $\D_y$ the vectors $(v^i,0)$ and $\mu(v_y^i)(v_y^i,1)$ can be complemented to a lattice basis of $N \times \ZZ$,\label{item:isolated-extremal}
  \item for $i \in \{1,2\}$ and $\rho_i \cap \deg \D \neq \emptyset$ then with the exception of (at most) two vertices $v_z^i, v_{z'}^i$ the vertices $v_y^i$ are integral and the vectors $\mu(v_z^i)(v_z^i,1)$ and $\mu(v_{z'}^i)(\sum_{y \neq z} v_y^i, 1)$ can be complemented to a lattice basis of $N \times \ZZ$.\label{item:isolated-non-extremal}
  \end{enumerate}
\end{proposition}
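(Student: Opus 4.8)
The plan is to use that the singular locus of $X=\X(\D)$ is closed and $T$-invariant, hence a union of $T$-orbit closures, so that $X$ has an isolated singularity precisely when it is smooth at every closed point lying on a $T$-orbit other than the vertex $0$. Since smoothness is constant along a $T$-orbit it suffices to test one point per orbit. Because $X$ is a cone singularity we have $R_0=\CC$, so $\{0\}$ is the only zero-dimensional orbit; and a point on a two-dimensional orbit is automatically smooth, since by the local structure theory of \cite{pre05013675} the threefold $X$ is there the product of a torus with a normal affine curve, hence smooth. So everything comes down to the finitely many one-dimensional orbits.

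Next I would use the orbit description of $\X(\D)$ from \cite{pre05013675} (see also \cite{tsing,is17}): the one-dimensional orbits are indexed by the edges of the polyhedral coefficients $\D_y$ together with the rays of $\tail \D$, and around a point of such an orbit $X$ is \'etale-locally isomorphic to the product of a torus with the affine toric variety associated to a two-dimensional cone that arises as a face of the cone $\sigma_y$ from Proposition~\ref{prop:degenerations}. Concretely the edges are of three kinds, matching the three conditions. A bounded edge $\overline{vw}$ of a coefficient $\D_y$ gives the cone $\pos\bigl(\mu(v)(v,1),\mu(w)(w,1)\bigr)$; both generators are primitive in $N\times\ZZ$, so the toric surface is smooth exactly when they extend to a lattice basis, which is condition~(\ref{item:isolated-compact-facet}). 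An unbounded edge $v_y^i+\rho^i$ with $\rho^i\cap\deg \D=\emptyset$ gives the cone $\pos\bigl((v_i,0),\mu(v_y^i)(v_y^i,1)\bigr)$ -- here $(v_i,0)$ is still a ray of $\sigma_y$, and $\rho^i$ contributes a genuine horizontal $T$-divisor -- and its regularity is condition~(\ref{item:isolated-extremal}). Finally, when $\rho^i\cap\deg \D\neq\emptyset$ the ray $(v_i,0)$ is no longer extremal in $\sigma_y$: the $T$-divisor attached to $\rho^i$ in the Altmann--Hausen resolution of $X$ gets contracted, so its local model is no longer read off from $\D_y$ alone, but from the ``vertical'' face of $\sigma_y$ on the $\rho^i$-side, which -- using $\deg \D=\D_z+\sum_{y\neq z}\D_y$ -- is spanned by $\mu(v_z^i)(v_z^i,1)$ and the vertex vector coming from $\sum_{y\neq z}v_y^i$; this, together with the condition that all but these two vertices $v_y^i$ be lattice points (otherwise $X$ has a one-dimensional singular locus), is condition~(\ref{item:isolated-non-extremal}).

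The third case is the real work and the main obstacle. One has to verify that when $\rho^i$ meets the degree polyhedron the orbits of $X$ ``on the $\rho^i$-side'' can fail to be smooth for at most two values of $y$ -- the non-lattice vertices $v_z^i,v_{z'}^i$ -- and that the local toric model at those two orbits is obtained by gluing the contributions of all the coefficients $\D_y$, which is what produces the summed vertex $\sum_{y\neq z}v_y^i$ and its multiplicity in condition~(\ref{item:isolated-non-extremal}); the bound ``at most two'' is essentially the admissibility phenomenon behind Proposition~\ref{prop:degenerations}, and pinning down this gluing is the delicate step. The first two cases are then routine once the orbit--cone dictionary is set up, and it only remains to note that nothing else contributes -- in particular the two-dimensional orbits over points of $\supp \D$ and the generic two-dimensional orbits are smooth by the first paragraph. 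Assembling conditions~(\ref{item:isolated-compact-facet})--(\ref{item:isolated-non-extremal}) then proves the statement.
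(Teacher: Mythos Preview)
Your approach and the paper's are essentially the same: both reduce the question to the regularity of exactly the same two-dimensional cones. The paper, however, does not route the argument through the degeneration cone $\sigma_y$ of Proposition~\ref{prop:degenerations}; it applies directly the singularity criteria of \cite{tsing}, formulated in terms of the face divisors $\D^u=\sum_y\face(\D_y,u)\cdot y$. For a bounded facet $\overline{vw}$ one is in situation~(ii) of \cite[Thm.~5.4]{tsing} and \cite[Thm.~5.3]{tsing} gives the cone $\RR_{\ge 0}\cdot(\overline{vw}\times\{1\})$; for an unbounded facet with $\rho^i\cap\deg\D=\emptyset$ one is again in situation~(ii); and for $\rho^i\cap\deg\D\neq\emptyset$ one is in situation~(i), where \cite[Prop.~5.1]{tsing} yields condition~(\ref{item:isolated-non-extremal}) directly.

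Your framing via ``faces of $\sigma_y$'' is suggestive but needs care: the \'etale-local model of $X$ along a one-dimensional orbit is not \emph{a priori} a face of the toric degeneration~--- that would be the local model of the special fibre $\X_0$, not of $X$ itself. The cones you write down happen to coincide with those coming from $\D^u$, but establishing this identification amounts to reproving the results of \cite{tsing} you cite anyway. In particular, the ``at most two non-integral vertices'' constraint in case~(\ref{item:isolated-non-extremal}) is not the admissibility phenomenon of Proposition~\ref{prop:degenerations}; it is the smoothness criterion \cite[Prop.~5.1]{tsing} for the situation where a horizontal divisor gets contracted. So your outline is correct, but the cleanest and most honest execution is to invoke \cite[Thm.~5.3, Thm.~5.4, Prop.~5.1]{tsing} explicitly rather than the degeneration cone.
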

\begin{proof}
  By Theorem~5.4 in \cite{tsing} we have to consider the polyhedral divisors
\[\D^u = \sum \face(\D_y,u) \otimes y\]
for elements $u \in (\tail \D)^\vee \setminus \{0\}$, such that at least
one of the coeffcients $\face(\D_y,u)$ has codimension one. Here, $\face(\D_y,u)$ denotes the face of the polyhedron $\D_y$ where the linear form $\langle u, \cdot \rangle$ is minimised. Assume that $\overline{vw} = \face(\D_y,u)$. Then $\tail \D^u = \tail(\overline{vw}) = 0$ and, in particular, $\deg \D^u \not \subset \tail \D^u$.
Hence, we are in in situation (ii) of \cite[Thm.~5.4]{tsing}. In which case, by Theorem \cite[Thm.~5.3]{tsing} $\delta_y=\RR_{\geq 0} \cdot (\overline{vw} \times \{1\})$ need to be a \emph{regular cone}, i.e. the primitive lattice generators of it's rays can be complemented to a lattice basis. But the rays of $\delta_y$ are precisely $\mu(v)(v,1)$ and $\mu(w)(w,1)$. Hence,
we obtain condition (\ref{item:isolated-compact-facet}).

Now, assume $v_y^i + \rho^i = \face(\D_y,u)$. Assume first $\rho^i \cap \deg \D = \emptyset$ then $\deg \D^u = \face(\deg \D,u) \not \subset \rho^i = \face(\tail \D,u)$. Hence, we are again in the situation (ii) of \cite[Thm.~5.4]{tsing}. In which case, by \cite[Thm.~5.3]{tsing} the closure of $\RR_{\geq 0} \cdot ((v^i_y + \rho^i) \times \{1\})$, which is spanned by $(v^i_y,1)$ and $(v^i,0)$ has to be a regular cone, i.e. $\mu(v^i_y)(v^i_y,1)$ and $(v^i,0)$ can be complemented to a lattice basis. Hence, condition~(\ref{item:isolated-extremal}) follows.

It remains consider the situation when $\rho^i \cap \deg \D \neq \emptyset$. This implies that $\rho^i \cap \deg \D = \face(\deg \D, u) = \deg \D^u$. In particular, $\deg \D^u \subset \rho^i$. Hence, we are in the situation (i) of \cite[Thm.~5.4]{tsing}. Similar to the other cases \cite[Prop.~5.1]{tsing} implies condition (\ref{item:isolated-non-extremal}) above.

The inverse implication follows analogously.
\end{proof}

\begin{proposition}
\label{prop:q-factorial-isolated}
  A $\QQ$-factorial three-dimensional cone singularity with maximal torus $T=(\CC^*)^2$ and isolated singularity is given by a polyhedral divisor supported on three points on $\PP^1$, which we denote by $0$, $\infty$ and $1$, fulfilling the following properties
  \begin{enumerate}
  \item $\D_{\infty} = \overline{v_\infty  v'_\infty} + \tail \D$, with 
$v_\infty,v_\infty' \in N$ being a lattice elements and $(v_\infty-v'_\infty)$ being a primitive lattice element. 
  \item $\D_{0}=v_0 + \tail \D$ and $\D_1 = v_1 + \tail \D$ for some $v_0,v_1 \in N_\QQ$.
  \end{enumerate}
\end{proposition}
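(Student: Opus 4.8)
The hypotheses force $X$ to have complexity one: a three-dimensional toric cone singularity carries a $(\CC^*)^3$-action, whereas here the maximal torus is $T=(\CC^*)^2$. Since $X$ is log-terminal (we are in the Fano setting of this section), the discussion in Section~\ref{sec:polyhedral-divisors} gives $X=\X(\D)$ for a proper polyhedral divisor $\D$ on $Y\cong\PP^1$, whose tail cone $\tail\D\subset N_\RR\cong\RR^2$ is full-dimensional and pointed and therefore has exactly two rays $\rho^1,\rho^2$, with primitive generators $v^1,v^2$. Consequently every polyhedral coefficient $\D_y$ has the two unbounded facets $v^1_y+\rho^1$ and $v^2_y+\rho^2$ together with finitely many compact facets, each of which is a line segment; write $y_1,\dots,y_r\in\PP^1$ for the support of $\D$.

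The substantive step is to rephrase $\QQ$-factoriality combinatorially. Being a normal affine cone (so $\Pic(X)=0$), $X$ is $\QQ$-factorial exactly when $\Cl(X)$ is finite, i.e.\ when $\rk\Cl(X)=0$; recall also that $\Cl(X)\cong H^2(L,\ZZ)$ by Lemma~\ref{lem:sphere-homology}. Now I would invoke the description of $\Cl(\X(\D))$ from \cite{tsing}: the $T$-invariant prime divisors of $X$ correspond to the rays of $\tail\D$ and to the vertices of the coefficients $\D_{y_i}$, and $\Cl(X)$ is the cokernel of the associated evaluation map, whose relations are contributed by the characters of $T$ and by the linear equivalences $y_i\sim y_j$ on $\PP^1$. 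Computing $\rk\Cl(X)$ from this data---an explicit function of $r$, of the vertex numbers $\#\D_{y_i}^{(0)}$, and of which of the two rays $\rho^i$ meet the degree polyhedron $\deg\D$---and imposing $\rk\Cl(X)=0$ (together with properness, $\deg\D\subsetneq\tail\D$) forces $\D$ to be supported on exactly three points, which we label $0,1,\infty$, and forces the coefficients to be as simple as possible: two of them, say $\D_0$ and $\D_1$, are translates $v_0+\tail\D$ and $v_1+\tail\D$ of the tail cone with $v_0,v_1\in N_\QQ$, while the third, $\D_\infty$, has a single compact facet $\overline{v_\infty v'_\infty}$. This already yields the statement about the support and property~(ii). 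I expect this rank bookkeeping to be the main obstacle: one must separate the torsion of $\Cl(X)$ (irrelevant for $\QQ$-factoriality) from the free part, and handle uniformly the dichotomy $\rho^i\cap\deg\D=\emptyset$ versus $\rho^i\cap\deg\D\neq\emptyset$---the same dichotomy that splits Proposition~\ref{prop:isolated} into its cases (\ref{item:isolated-extremal}) and (\ref{item:isolated-non-extremal}).

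It remains to extract property~(i), for which one feeds the previous step into the isolatedness criterion. Since $\overline{v_\infty v'_\infty}$ is the unique compact facet of $\D_\infty$, Proposition~\ref{prop:isolated}\,(\ref{item:isolated-compact-facet}) applies: $\mu(v_\infty)(v_\infty,1)$ and $\mu(v'_\infty)(v'_\infty,1)$ complement to a lattice basis of $N\times\ZZ$. Combining this with the log-terminality inequality~\eqref{eq:log-terminal} and the rank constraint just used, one deduces that $v_\infty,v'_\infty$ lie in $N$ and that $v_\infty-v'_\infty$ is primitive in $N$, which is property~(i). The conditions on $v_0,v_1$ and on the two rays are exactly the remaining parts of Proposition~\ref{prop:isolated} and are not asserted here. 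Finally, reading the two steps in reverse shows that a polyhedral divisor of the stated shape which in addition satisfies Proposition~\ref{prop:isolated} does define an isolated $\QQ$-factorial three-dimensional cone singularity with maximal torus $(\CC^*)^2$, so that the list (i)--(ii) is, together with Proposition~\ref{prop:isolated}, both necessary and sufficient.
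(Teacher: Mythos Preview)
Your overall architecture matches the paper's: reduce to complexity one, use the combinatorial description of $\Cl(\X(\D))$ to constrain the shape of $\D$ via $\QQ$-factoriality, and then feed in Proposition~\ref{prop:isolated}. But two steps are misattributed, and the argument as written has a genuine gap.

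First, the claim that ``imposing $\rk\Cl(X)=0$ \dots\ forces $\D$ to be supported on exactly three points'' is not correct. What the rank computation (this is \cite[Cor.~3.15]{tdiv} in the paper) actually yields is a dichotomy: either \emph{(a)} exactly one coefficient has a compact facet and all others are translated tail cones, with $\deg\D$ meeting both rays, or \emph{(b)} all coefficients are translated tail cones and $\deg\D$ meets exactly one ray. Neither case bounds the number of support points. To get down to three points you must use two further ingredients that you only invoke later (or not at all): maximality of the $2$-torus, which via \cite[Sec.~11]{pre05013675} forces at least three coefficients that are not lattice translates of $\tail\D$; and isolatedness, specifically Proposition~\ref{prop:isolated}\,(\ref{item:isolated-non-extremal}), which caps the number of non-integral $v_y^i$ at two for each ray. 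In particular, case~(b) is excluded not by the rank condition but because three non-integral translates would violate (\ref{item:isolated-non-extremal}); this is exactly how the paper argues.

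Second, your derivation of property~(i) is not right. From Proposition~\ref{prop:isolated}\,(\ref{item:isolated-compact-facet}) you only learn that $\mu(v_\infty)(v_\infty,1)$ and $\mu(v'_\infty)(v'_\infty,1)$ extend to a lattice basis; this does \emph{not} force $\mu(v_\infty)=\mu(v'_\infty)=1$, and the log-terminality inequality~\eqref{eq:log-terminal} is too weak to close the gap (with $\mu(v_0),\mu(v_1)\geq 2$ one still has room for a further non-integral vertex). The integrality of $v_\infty,v'_\infty$ comes instead from Proposition~\ref{prop:isolated}\,(\ref{item:isolated-non-extremal}): once the two non-integral slots are occupied by $v_0$ and $v_1$, the remaining vertices $v_\infty^i$ must be lattice points. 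Only then does (\ref{item:isolated-compact-facet}) reduce to the statement that $(v_\infty,1)$ and $(v'_\infty,1)$ extend to a basis, which is what gives primitivity of $v_\infty-v'_\infty$.
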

\begin{proof}
  By \cite[Cor. 3.15]{tdiv} having $\QQ$-factoriality in this situation implies that we are in one of the following two situations
  \begin{enumerate}
  \item there is exactly one polyhedral coefficient of the form
    $\overline{v v'} + \tail \D$ and all others are just translated tail cones. Moreover, $\deg \D$ intersects bot rays of $\tail \D$.\label{item:no-extremal-ray}
  \item All coefficients are translated tail cones and $\deg \D$ intersects exactly one ray $\rho$ of $\tail \D$.\label{item:extremal-ray}
  \end{enumerate}
  Let's assume we are in situation (\ref{item:extremal-ray}). The fact that the $2$-torus action $\X(\D)$ is assumed to be maximal implies by \cite[Sec.~11]{pre05013675} that there are at least three polyhedral coefficients, which are not just lattice translations of $\deg \D$. However, this would violate condition (~\ref{item:isolated-non-extremal}) of Proposition~\ref{prop:isolated} for the ray $\rho$.

  Hence, we are in necessarily in situation (\ref{item:no-extremal-ray}). Again he fact that  the $2$-torus action $\X(\D)$ cannot be extended to a $3$-torus action implies that there are at least three polyhedral coefficients, which not just integral translations of $\deg \D$. Beside the one coefficient of the form $\overline{v v'} + \tail \D$ there are at least two further coefficients which are non-integral translates of the tail cone. However,  (\ref{item:isolated-non-extremal}) of Proposition~\ref{prop:isolated} implies that there cannot be more than two of such elements and
that $v$ and $v'$ are lattice elements. Moreover, by condition~(\ref{item:isolated-compact-facet}) of
 Proposition~\ref{prop:isolated} $v-v'$ has to be a primitive lattice element.
\end{proof}

\begin{theorem}
 A Sasaki-Einstein structure on the link of a $\QQ$-factorial three-dimensional Fano cone singularity of complexity $1$ is necessarily quasi-regular.
\end{theorem}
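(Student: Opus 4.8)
\emph{Proof plan.}
The plan is to make the volume functional of Theorem~\ref{thm:kstab-pdiv} explicit on the one-dimensional slice $[\u=1]\cap\tail\D$ of the Reeb cone and to prove that its unique critical point --- the rescaled Reeb field of the Sasaki--Einstein structure --- is rational. So assume for contradiction that the structure is irregular, so that its Reeb field $\xi$ generates the whole acting torus $T$; this torus is two-dimensional and maximal (if it were not, $X$ would be toric and $\xi$ rational by Example~\ref{exp:q-factorial-toric-2}), and $X$ has an isolated singularity. By Proposition~\ref{prop:q-factorial-isolated} we may then assume $X=\X(\D)$ with $\D$ supported at $0,1,\infty\in\PP^1$, $\D_\infty=\overline{v_\infty v'_\infty}+\tail\D$ for lattice points $v_\infty,v'_\infty\in N$ with $v_\infty-v'_\infty$ primitive, and $\D_0=v_0+\tail\D$, $\D_1=v_1+\tail\D$.

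First I would read off the combinatorics. From the $\QQ$-Gorenstein system~\eqref{eq:canonical-weight-vertical}, since $v_\infty,v'_\infty$ have multiplicity one, $\langle\u,v_\infty\rangle=\langle\u,v'_\infty\rangle=a_\infty$, so $\u\perp(v_\infty-v'_\infty)$; as $\u\in\relint(\tail\D)^\vee$, this forces $\u\in\RR_{>0}m_*$, where $m_*$ is the primitive generator of the ray $(\tail\D)^\vee\cap(v_\infty-v'_\infty)^\perp$. Writing $P=v_0+v_1+v_\infty$ and $P'=v_0+v_1+v'_\infty$ for the two vertices of $\deg\D=\conv(P,P')+\tail\D$, the same system together with $\sum a_{y_i}=2$ gives $\langle\u,P\rangle=\langle\u,P'\rangle=\tfrac1{\mu(v_0)}+\tfrac1{\mu(v_1)}=:\nu>0$. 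Moreover, as in the proof of Proposition~\ref{prop:q-factorial-isolated} we are in case~\eqref{item:no-extremal-ray}, so $\deg\D$ meets both rays $\rho^1,\rho^2$ of $\tail\D$; since the only vertices of $\deg\D$ are $P,P'$, and properness forbids $0\in\deg\D$, an argument with the inner normals $n_1,n_2$ of $\rho^1,\rho^2$ (each is nonnegative on $\deg\D$ and vanishes at a point of $\deg\D\subset\tail\D$ only along the corresponding ray) shows that $P$ and $P'$ lie on the two distinct rays --- say $P\in\rho^1\setminus\{0\}$ and $P'\in\rho^2\setminus\{0\}$.

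Next I would bring in the analytic input. By Theorem~\ref{thm:k-stab-Einstein} the Sasaki--Einstein hypothesis makes $(X,\hat\xi)$ K-stable for $\hat\xi=\xi/\langle\u,\xi\rangle$, and condition~(i) of Theorem~\ref{thm:kstab-pdiv}, together with Proposition~\ref{prop:vol-pdiv} (by which $\vol(\sigma_y^\vee(\,\cdot\,,0))$ is independent of $y$ and equals $\vol$), gives $D_v\vol(\hat\xi)=0$ for all $v\in\u^\perp$; since $\vol$ is convex and proper on $[\u=1]\cap\tail\D$ by \cite{zbMATH05294705}, $\hat\xi$ is its unique minimiser there. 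Using the degeneration at $y=\infty$, Propositions~\ref{prop:degenerations} and~\ref{prop:vol-pdiv} give
\[
\vol(\xi)=\int_{(\tail\D)^\vee(\xi)}\Bigl(\langle u',v_0+v_1\rangle+\min\bigl(\langle u',v_\infty\rangle,\langle u',v'_\infty\rangle\bigr)\Bigr)\,du',
\]
and the integrand is linear, equal to $\langle\,\cdot\,,P\rangle$ on $\pos(n_1,m_*)$ and to $\langle\,\cdot\,,P'\rangle$ on $\pos(m_*,n_2)$, where $m_*$ lies between $n_1$ and $n_2$ because $\u\in\relint(\tail\D)^\vee$. Splitting the integral over these two simplicial cones, using the simplex volume formula of Example~\ref{exp:q-factorial-toric-1}, and feeding in the identities $\langle n_1,P\rangle=\langle n_2,P'\rangle=0$, $\langle\u,P\rangle=\langle\u,P'\rangle=\nu$ and $\langle m_*,\xi\rangle\equiv\text{const}$ on $[\u=1]$, all cross terms collapse and one is left, up to a positive rational constant, with
\[
\vol(\xi)\big|_{[\u=1]}\propto\frac{c_2}{\langle n_1,\xi\rangle}+\frac{c_1}{\langle n_2,\xi\rangle},\qquad c_1\langle n_1,\xi\rangle+c_2\langle n_2,\xi\rangle\equiv\text{const},
\]
where $m_*=c_1n_1+c_2n_2$ with $c_1,c_2\in\QQ_{>0}$.

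By an elementary minimisation (Cauchy--Schwarz) the minimum on this segment is attained exactly when $c_1\langle n_1,\xi\rangle=c_2\langle n_2,\xi\rangle$, so the linear constraint pins $\langle n_1,\hat\xi\rangle$ and $\langle n_2,\hat\xi\rangle$ down to rational values; since $n_1,n_2$ is a rational basis of $M_\RR$, this gives $\hat\xi\in N_\QQ$, hence $\xi$ is rational and the structure is quasi-regular --- a contradiction. The step I expect to be the main obstacle is the third paragraph: carrying out the volume integral cleanly and, above all, verifying that $\QQ$-factoriality --- through the fact that $\deg\D$ meets both rays of $\tail\D$, cf.\ the proof of Proposition~\ref{prop:q-factorial-isolated} --- genuinely forces both vertices of $\deg\D$ onto the rays of the tail cone. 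It is precisely this positioning that degenerates the a priori higher-degree critical-point equation into the rational, \emph{barycentric} solution, in complete analogy with the toric computation of Example~\ref{exp:q-factorial-toric-2}.
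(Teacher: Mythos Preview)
Your argument is correct and ultimately reaches the same rational critical point as the paper, but the route is genuinely different in its computational organisation.

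The paper works with the degeneration at a point $y\in\{0,1\}$ where $\D_y$ is a translated tail cone. There the cone $\sigma_y\subset N_\RR\times\RR$ is \emph{simplicial}, so the volume is given directly by the formula of Example~\ref{exp:q-factorial-toric-1} as $\vol\propto\prod_i\langle w_i,(\hat\xi,0)\rangle^{-1}$. One of the three dual generators, say $w_1$, is orthogonal to the two ``level $-1$'' generators and hence to their difference $(v_\infty-v'_\infty,0)$; this forces $p_1(w_1)\parallel\u$, so $\langle w_1,(\hat\xi,0)\rangle$ is constant on $[\u=1]$, and the vanishing of $D_v\vol$ becomes a single \emph{linear} equation in $\hat\xi$ with rational coefficients.

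You instead degenerate at $y=\infty$, where $\sigma_\infty$ is \emph{not} simplicial, and compute $\vol$ by fibre-integrating over the extra coordinate. What makes your integral collapse to the clean form $\tfrac{c_2}{\langle n_1,\xi\rangle}+\tfrac{c_1}{\langle n_2,\xi\rangle}$ is your observation that the two vertices $P,P'$ of $\deg\D$ lie on the two rays $\rho^1,\rho^2$ of $\tail\D$. This is correct (it follows exactly as you indicate: $\deg\D\subset\tail\D$, $\deg\D$ meets both rays by $\QQ$-factoriality, and the minimum of each inner normal $n_i$ on $\deg\D$ is attained at a vertex), and it is in fact the same geometric input the paper needs but leaves implicit: it is precisely the condition that the tail generators $(v^i,0)$ are redundant in $\sigma_y$, i.e.\ that the paper's cone really has only three extremal rays. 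With this in hand, your formula and the paper's coincide on $[\u=1]$, since $\tfrac{c_2}{\langle n_1,\xi\rangle}+\tfrac{c_1}{\langle n_2,\xi\rangle}=\tfrac{\langle m_*,\xi\rangle}{\langle n_1,\xi\rangle\langle n_2,\xi\rangle}$ and $\langle m_*,\xi\rangle$ is constant there. Your Cauchy--Schwarz minimisation then yields the same rational $\hat\xi$ as the paper's linear equation; the two endgames are equivalent.

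In short: the paper buys brevity by choosing the simplicial degeneration and quoting Example~\ref{exp:q-factorial-toric-1}; you buy a more explicit and self-contained computation, and along the way you make precise the step (vertices of $\deg\D$ on the rays of $\tail\D$) that the paper uses without comment. No gap.
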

\begin{proof}
Consider a polyhedral divisor fulfilling the conditions of Proposition~\ref{prop:q-factorial-isolated}. We set $v = v_\infty -v_\infty'$. Then the cone
\[\sigma_1 = \pos\{(v_0,-1),(v_0+v,-1),(v_1,1)\} \subset N_\RR \times \RR\]
describes the central fibre of a special degeneration as discussed in Section~\ref{sec:polyhedral-divisors}. Assume that $\u$ is the canonical weight of $\D$. By (\ref{eq:canonical-weight-vertical}) this implies that $\langle \u, v_\infty \rangle = \langle \u, v'_\infty \rangle$. 
Since $\dim N_\RR = 2$, the orthogonal complement $\u^\perp$ is generated by $v=v_\infty-v'_\infty$. On the other hand, the dual cone $\sigma^\vee_0$ is generated by three primitive lattice elements $w_1$, $w_2$ and $w_3$. One of them, say $w_1$, will be orthogonal to $(v_0+v,-1)$ and $(v_0,-1)$ and, hence, to their difference $(v,0)$, as well. This implies 
\[\langle w_1, (v,0) \rangle = \langle p_1(w_1), v\rangle = 0.\]
Here, $p_1\colon M_\RR \times \RR \to M_\RR$ denotes the projection. Then for dimension reasons 
\begin{equation}
  p_1(w_1) = \lambda \u\label{eq:multiple-of-canonical-weight}
\end{equation}

for some $\lambda \in \RR$.

Assume that $\xi \in N_\RR$ is the Reeb field of a Sasaki-Einstein structure on the link of $X$ and $v \in \u^\perp$. Then by Theorem~\ref{thm:k-stab-Einstein} and Theorem~\ref{thm:kstab-pdiv} we must have 
\[D_{-v}\vol(\hat \xi) = D_{(-v,0)}\vol (\sigma_0^\vee (\hat \xi,0)=0.\] On the other hand, $\sigma_0$ is a simplicial cone. Hence, as seen in Example~\ref{exp:q-factorial-toric-1} for $\vol(\hat \xi+sv)$ we obtain
\[\vol (\sigma^\vee(\hat \xi-sv,0)) = \frac{6|\det(w_1,w_2,w_3)|}{\langle w_1,(\hat \xi+sv,0) \rangle \langle w_2,(\hat \xi-sv,0) \rangle\langle w_3,(\hat \xi-sv,0) \rangle}.\]
Hence, $D_v\vol(\hat \xi)$ vanished if and only if 
\begin{equation}
\left. \frac{d}{ds}\right|_{s=0} \langle w_1,(\hat \xi-sv,0) \rangle \langle w_2,(\hat \xi-sv,0) \rangle\langle w_3,(\hat \xi-sv,0) \rangle=0.\label{eq:vol-deriv}
\end{equation}
On the other hand  $\langle w_1,(\hat \xi-sv,0) \rangle = \langle p_1(w_1),\hat \xi -sv \rangle = \lambda$ by the definition of $\hat \xi$ and (\ref{eq:multiple-of-canonical-weight}). This implies, that the left-hand-side of (\ref{eq:vol-deriv}) is a linear in $\hat \xi$ with coefficients in $\QQ$. Hence, the Reeb field $\hat \xi$, as the unique solution of (\ref{eq:vol-deriv}), must be rational and the corresponding Sasaki-Einstein structure on the link of must be quasi-regular. 
\end{proof}

\section{A family of non-toric irregular Sasaki-Einstein metrics}
\label{sec:main-example}
In this section we prove the following theorem.
\begin{theorem}
\label{thm:family-example}
  For every $k \in \NN \setminus \{3\}$ there is a family of irregular Sasaki-Einstein structures on $k \cdot (S^2 \times S^3)$. For $k > 1$ these structures admit non-trivial moduli.
\end{theorem}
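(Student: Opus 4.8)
The plan is to exhibit, for each $k = 2\ell+1$ with $\ell \geq 0$ and also for the even values forced by the construction (the statement allows all $k \neq 3$), an explicit family of polyhedral divisors $\D_\ell$ on $\PP^1$ with two-dimensional tail cone, producing a three-dimensional isolated Fano cone singularity $X_\ell = \X(\D_\ell)$ of complexity $1$, and then to (a) verify that the link $L_\ell$ is the connected sum $k\cdot(S^2\times S^3)$, and (b) solve the K-stability equations from Theorem~\ref{thm:kstab-pdiv} to produce an \emph{irregular} Reeb field, with a positive-dimensional family of admissible choices when $\ell > 1$. First I would write down the candidate $\D_\ell$: a tail cone $\sigma = \pos(v_1,v_2)$ that is \emph{not} simplicial-unimodular (so that $X_\ell$ is not $\QQ$-factorial, dodging the previous section's obstruction), supported on finitely many points $y_1,\dots,y_r \in \PP^1$ with line-segment coefficients $\D_{y_j} = \overline{v\,w} + \sigma$ chosen so that the isolated-singularity conditions (\ref{item:isolated-compact-facet})--(\ref{item:isolated-non-extremal}) of Proposition~\ref{prop:isolated} hold; the number of nontrivial coefficients and their lattice lengths will be the parameters controlling $k$.

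Next I would compute the topology. Using Lemma~\ref{lem:sphere-homology} one gets $H^2(L_\ell,\ZZ) \cong \Cl(X_\ell)$, which for a complexity-one $T$-variety is computable from $\D_\ell$ via the Altmann--Hausen exact sequences in \cite{pre05013675} (or \cite{tdiv}); I would arrange $\Cl(X_\ell) = 0$ so that $L_\ell$ is a rational homology sphere only in the degenerate cases and otherwise has the right $b_2$. Since an isolated complexity-one Fano link in dimension $5$ is a simply-connected spin $5$-manifold with torsion-free homology, Smale--Barden classification identifies it from $H^2$; I would pin down $H^2(L_\ell,\ZZ) \cong \ZZ^{2\ell}$ so that $L_\ell \cong (2\ell+1)(S^2\times S^3)$, and separately handle the small/even cases to cover all $k \neq 3$. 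The fundamental-group computation (acknowledged to Moraga) enters here to confirm simple-connectedness.

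Then comes the analytic input. By Propositions~\ref{prop:vol-pdiv} and the Futaki formula (\ref{eq:futaki-projection}), K-stability of $(X_\ell,\xi)$ reduces, via Theorem~\ref{thm:kstab-pdiv}, to: the volume functional $\xi \mapsto \vol(\sigma_y^\vee(\hat\xi,0))$ restricted to the slice $[\langle\u,\cdot\rangle = 1]\cap \sigma$ has its unique critical point (it is convex and proper by \cite{zbMATH05294705}) and that critical point satisfies the strict positivity $D_{v'}\vol > 0$ for every admissible $y$ and $v' \in \u_y^\perp \cap (N_\QQ\times\ZZ_{>0})$. I would compute $\vol(\sigma_y^\vee(\hat\xi,0))$ explicitly as a rational function of $\hat\xi$ (a ratio of products of linear forms coming from the rays of $\sigma_y^\vee$, as in Example~\ref{exp:q-factorial-toric-1} when $\sigma_y$ is simplicial and by triangulation otherwise), set its gradient on the slice to zero, and check that the solution $\hat\xi^*$ has \emph{irrational} coordinates — which I would engineer by making the defining polynomial of $\hat\xi^*$ genuinely nonlinear (degree $\geq 2$) in at least one variable, in contrast to the $\QQ$-factorial case where one ray always killed the nonlinearity. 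The moduli for $\ell > 1$ would come from a family of such $\D_\ell$ depending on extra lattice-point parameters that do not change $L_\ell$ up to diffeomorphism but do change the complex structure of $X_\ell$ (hence the Sasaki structure), exactly as in the Gauntlett--Martelli--Sparks--Waldram-type constructions generalised in \cite{zbMATH05034151}.

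The main obstacle I anticipate is the simultaneous bookkeeping in the last step: one must choose $\D_\ell$ so that (i) the isolated and $\QQ$-Gorenstein log-terminal conditions (\ref{eq:canonical-weight-vertical})--(\ref{eq:log-terminal}) hold, (ii) the topology comes out to $k(S^2\times S^3)$, and (iii) the critical Reeb field is provably irrational while the strict Futaki positivity holds at \emph{every} admissible degeneration — these pull the parameters in competing directions, and verifying (iii) requires controlling the sign of $D_{v'}\vol$ over a whole cone of directions rather than at one point. I would organise the argument by first fixing a minimal ``seed'' example (smallest $\ell$, likely $\ell=2$) where all three can be checked by hand, then scaling the combinatorial data and showing each condition is preserved, and finally treating the sporadic small and even values of $k$ by ad hoc variants of the seed.
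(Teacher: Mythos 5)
Your outline follows the same architecture as the paper's proof (explicit polyhedral divisor, Proposition~\ref{prop:isolated} for isolatedness, the class group plus Smale--Barden for the topology, Theorem~\ref{thm:kstab-pdiv} plus an irrationality argument for the Reeb field), but as written it is a plan with the hard content deferred, and two of the steps you do commit to would fail as stated.

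First, the topology bookkeeping is wrong. You propose to ``arrange $\Cl(X_\ell)=0$'' and then to pin down $H^2(L_\ell,\ZZ)\cong\ZZ^{2\ell}$ for $L_\ell\cong(2\ell+1)(S^2\times S^3)$. By Lemma~\ref{lem:sphere-homology} you need $\Cl(X)\cong H^2(L,\ZZ)\cong\ZZ^{k}$ (not $0$, and not $\ZZ^{k-1}$): $H^2(k(S^2\times S^3))\cong\ZZ^k$. The paper computes $\Cl(X_k)\cong\ZZ^k$ from \cite[Cor.~3.15]{tdiv}, and then invokes \cite[Prop.~10.2.27]{zbMATH05243165} (only $S^5$ and $k(S^2\times S^3)$ carry Sasaki--Einstein structures with $(S^1)^2$-symmetry) rather than relying on spin-ness and torsion-freeness of $H_2(L)$, neither of which you establish. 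Second, your mechanism for the moduli is the wrong one: varying ``extra lattice-point parameters'' of $\D_\ell$ changes the cones $\sigma_y$, hence the volume function, the critical Reeb field, and potentially the diffeomorphism type, so you would have to redo the K-stability, irrationality, and topology verifications for every member of the family. The paper instead fixes the combinatorial data ($k$ identical coefficients $\overline{(0,0)(1,0)}+\sigma$) and varies the $k$ support points $y_1,\dots,y_k\in\CC^*$; by \cite[Cor.~8.12]{pre05013675} this gives a $(k-1)$-dimensional family of pairwise non-isomorphic $X_k$ while every combinatorial computation (volume, Futaki, class group, fundamental group) is literally unchanged. Finally, the entire proof lives in the explicit example you leave unspecified: the irrationality of the critical Reeb field is obtained in the paper by exhibiting $x_0$ as a root of an explicit quadratic whose discriminant $(15k+20)^2-256$ is a non-square exactly for $k\neq 3$, and the strict Futaki positivity is checked at the two admissible degenerations $\sigma_0$, $\sigma_\infty$ by explicit estimates. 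Without producing the divisor and carrying out these computations, the claim is not proved; note also that the isolatedness check forces $k$ odd (a gcd of maximal minors must be $1$), which your hedge about ``even values forced by the construction'' does not resolve.
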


For a fixed odd $k \in \NN$ we consider the p-divisors $\D$ on $\PP^1$ with tail cone $\sigma = \rho^1 + \rho^2$, where $\rho^1 = \RR_{\geq 0}\cdot (-1,1)$ and $\rho^1 = \RR_{\geq 0}\cdot (15k-4,8)$. Its coefficients are given as follows.
\begin{align*}
 \D_0 &= \frac{1}{5}(2,1) + \sigma,\\
 \D_\infty &= \frac{1}{3}(-2,1) + \sigma,\\
 \D_{y_1}= \ldots = \D_{y_k} &= \overline{(0,0)(1,0)} + \sigma.\\
\end{align*}
See Figure~\ref{fig:pdiv} for the case $k=1$.
\begin{figure}[h]
  \tikzset{x=0.7cm,y=0.7cm}
  \centering
  \begin{subfigure}{0.2\linewidth}
    \begin{tikzpicture}
      \begin{scope}
        \clip(-1,-0.2) rectangle (2,1); 
        \draw[thin,step=0.2,help lines,color=lightgray] (-2,-0.2) grid (2,1); \draw[fill] (0,0) circle (0.05);
        \draw[fill=lightgray]
        (0.4-1,1+0.2)--(0.4,0.2)--(11+0.4,8+0.2);
      \end{scope}
    \end{tikzpicture}
    \caption{$\D_0$}
  \end{subfigure}
  \begin{subfigure}{0.2\linewidth}
    \begin{tikzpicture}
      \begin{scope}
        \clip(-2,-0.2) rectangle (1,1);
        \draw[thin,step=0.2,help lines,color=lightgray] (-2,-0.2) grid (2,1);
        \draw[fill=lightgray] (-1-2/3,1+1/3)--(-2/3,1/3)--(11-2/3,8+1/3);
        \draw[fill] (0,0) circle (0.05);
      \end{scope}
    \end{tikzpicture}
    \caption{$\D_\infty$}
  \end{subfigure}
  \begin{subfigure}{0.2\linewidth}
    \begin{tikzpicture}
      \begin{scope}
        \clip(-1,-0.2) rectangle (2.5,1);
        \draw[step=0.2,help lines,color=lightgray] (-2,-0.2) grid (2.5,1);
        \draw[fill=lightgray] (-1,1)--(0,0)--(1,0)--(11+1,8);
        \draw[fill] (0,0) circle (0.05);
      \end{scope}
    \end{tikzpicture}
    \caption{$\D_1$}
  \end{subfigure}
  \begin{subfigure}{0.2\linewidth}
    \begin{tikzpicture}
      \begin{scope}
        \clip(-1.2,-0.2) rectangle (2.5,1);
        \draw[thin,step=0.2,help lines,color=lightgray] (-2,-0.2) grid (2,1);
        \draw[fill=white] (-2,2)--(0,0)--(11,8);
        \draw[fill=lightgray] (-1-2/3+2/5,1+1/3+1/5)--(-2/3+2/5,1/3+1/5)--(1-2/3+2/5,1/3+1/5)--(1-2/3+2/5+11,1/3+1/5+8);
        \draw[fill] (0,0) circle (0.05);
      \end{scope}
    \end{tikzpicture}
    \caption{$\deg \D \subsetneq \sigma$}
  \end{subfigure}
  \caption{Polyhedral coefficients and degree of $\D$ for $k=1$}
  \label{fig:pdiv}
\end{figure}
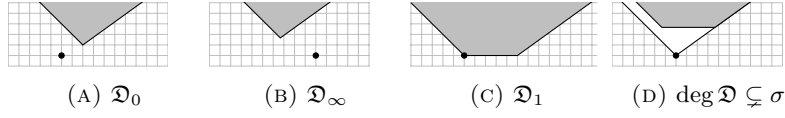

First note that $\D$ is proper for every $k$. Indeed, we have 
 \begin{equation}
\label{eq:degree}
\deg \D= \overline{\left(-\frac{4}{15},\frac{8}{15}\right)
    \left(\frac{15k- 4}{15},\frac{8}{15}\right)} + \sigma.
\end{equation}
This is a proper subset of $\sigma$. Hence, for a choice of $k$ distinct points $y_1, \ldots, y_k\in \CC^*$ the above data defines an affine variety $X_k =\X(\D)$. Varying the point configuration $y_1, \ldots, y_k$ leads to a $(k-1)$-dimensional family, which are pairwise equivariantly non-isomorphic, see \cite[Cor.~8.12]{pre05013675}. In the following we will see that for odd $k$ the $X_k$ are indeed isolated Fano cone singularities.

\begin{lemma}
  For every odd $k \in \NN$ the singularities $X_k$ are isolated.
\end{lemma}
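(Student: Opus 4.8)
The plan is to verify the three conditions of Proposition~\ref{prop:isolated}. The first step is to read off the combinatorial data of $\D$: the tail cone is $\sigma=\pos(v_1,v_2)$ with $v_1=(-1,1)$ spanning $\rho^1$ and $v_2=(15k-4,8)$ spanning $\rho^2$; the vertices of the non-trivial coefficients are $v_0=\tfrac15(2,1)$ with multiplicity $\mu(v_0)=5$, $v_\infty=\tfrac13(-2,1)$ with $\mu(v_\infty)=3$, and the lattice points $(0,0),(1,0)$ of each $\D_{y_i}$. From (\ref{eq:degree}) the edge of $\deg\D$ in direction $\rho^2$ actually lies on $\rho^2$, because $(\tfrac{15k-4}{15},\tfrac8{15})=\tfrac1{15}v_2$, whereas the edge in direction $\rho^1$ starts at $(-\tfrac4{15},\tfrac8{15})$ and, being parallel to $v_1$, never meets $\rho^1$. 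Hence $\rho^2\cap\deg\D\neq\emptyset$ and $\rho^1\cap\deg\D=\emptyset$, which dictates that condition~(ii) of Proposition~\ref{prop:isolated} is the one relevant for $\rho^1$ and condition~(iii) the one relevant for $\rho^2$.

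Conditions~(i) and~(ii) I expect to be routine and, importantly, independent of $k$. For~(i) the only coefficients with a compact facet are the $\D_{y_i}$, with facet $\overline{(0,0)(1,0)}$, and $(0,0,1),(1,0,1)$ complete to a lattice basis of $N\times\ZZ$ (say together with $(0,1,0)$). For~(ii), applied to $\rho^1$, one runs over all coefficients and checks that $(v_1,0)=(-1,1,0)$ together with $\mu(v_y^1)(v_y^1,1)$ --- which is $(2,1,5)$ for $\D_0$, $(-2,1,3)$ for $\D_\infty$, and $(0,0,1)$ for each $\D_{y_i}$ as well as for the trivial coefficients --- completes to a lattice basis of $N\times\ZZ$; equivalently that the $2\times2$ minors of the relevant $2\times3$ matrix have greatest common divisor $1$, which is immediate in each case.

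The crux is condition~(iii), applied to $\rho^2$, and this is the only place where the parity of $k$ enters. The vertices governing the $\rho^2$-facets are $v_0^2=v_0$, $v_\infty^2=v_\infty$ and $v_{y_i}^2=(1,0)$, so the only non-integral ones are $v_0^2$ and $v_\infty^2$. Taking $z=0$, $z'=\infty$, the two vectors required to complete to a lattice basis of $N\times\ZZ$ are $\mu(v_0^2)(v_0^2,1)=5(\tfrac25,\tfrac15,1)=(2,1,5)$ and $\mu(v_\infty^2)(\sum_{y\neq0}v_y^2,1)=3(k-\tfrac23,\tfrac13,1)=(3k-2,1,3)$, using $\sum_{y\neq0}v_y^2=v_\infty+k(1,0)=(k-\tfrac23,\tfrac13)$. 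The $2\times2$ minors of the matrix with rows $(2,1,5)$ and $(3k-2,1,3)$ are $4-3k$, $16-15k$ and $-2$, and $\gcd(4-3k,16-15k,2)=1$ exactly when $k$ is odd --- for even $k$ all three numbers are even, while the other labelling $z=\infty$, $z'=0$ leads instead to the minors $-5k-4$, $-15k-16$, $2$, with the same parity behaviour. Thus for odd $k$ condition~(iii) holds, and Proposition~\ref{prop:isolated} yields that $X_k$ is isolated. The only genuine difficulty is the bookkeeping: correctly extracting the vertices $v_y^i$ and the degree polyhedron and matching them against the three cases of Proposition~\ref{prop:isolated}; once that is in place, the parity computation is immediate.
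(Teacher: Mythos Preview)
Your proof is correct and follows the same route as the paper's own argument: verify the three conditions of Proposition~\ref{prop:isolated}, with only condition~(iii) for $\rho^2$ depending on the parity of $k$. The one cosmetic discrepancy is a sign in the third coordinate of the second vector in~(iii) --- the paper records it as $(3k-2,1,-3)$ with minors $3k-4,\ 15k-4,\ 8$ --- but since $3k-4$ is odd precisely when $k$ is odd, the parity conclusion is identical.
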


\begin{proof}
  We are using Proposition~\ref{prop:isolated}. Recall the degree of $\D$ from (\ref{eq:degree}).
  One sees that $\deg \D \cap \rho^1 \neq \emptyset$ and
  $\deg \D \cap \rho^2 = \emptyset$. Hence, we have to check
  condition~(\ref{item:isolated-extremal}) for $\rho^1$ and
  condition~(\ref{item:isolated-non-extremal}) for
  $\rho^2$. Condition~(\ref{item:isolated-compact-facet}) has to be
  checked for the coefficients $\D_{y_i}$.  \smallskip

\paragraph*{Condition~(\ref{item:isolated-compact-facet}):}
For every of the coefficients $\D_{y_i}$ we have to consider the
lattice elements $(0,0,1)$ and $(1,0,1)$. Those can indeed be
complemented to a lattice basis, e.g. by $(0,1,0)$.  \smallskip

\paragraph*{Condition~(\ref{item:isolated-extremal}):}
We have to consider the pairs of vectors $(-1,1,0), (2,1,5)$
for $\D_0$ and $(-1,1,0), (-2,1,3)$ for $\D_\infty$ and
$(-1,1,0), (0,0,1)$ for $\D_{y_1}= \ldots = \D_{y_k}$. Each of these
pairs can be complemented to a lattice basis by $(0,-1,-2)$,
$(0,1,-2)$ and $(0,1,0)$, respectively.  \smallskip

\paragraph*{Condition~(\ref{item:isolated-non-extremal}):}
We have $v^2_0=\frac{1}{5}(2,1)$, $v^2_\infty=\frac{1}{3}(-2,1)$,
$v^2_{y_1}=\ldots=v^2_{y_k}=(1,0)$. Hence, only two vertices are
non-integral and the two vectors $5\cdot(v^2_0,1)=(2,1,5)$ and
$3\cdot(v_\infty + \sum_i v^2_i,1)=(3k-2,1,-3)$.  The maximal minors
of $ \left(\begin{smallmatrix}
    3k-2 & 1& -3\\
    2 & 1& 5
  \end{smallmatrix}\right)
$
are $3k-4$, $15k-4$ and $8$. For odd $k$ the greatest common divisor
of the minors is $1$ and the two vectors can be complemented to a
lattice basis.

Now Proposition~\ref{prop:isolated} implies that the cone singularity
is indeed isolated.
\end{proof}

\begin{lemma}
  The singularity $X_k$ are Gorenstein and log-terminal.
\end{lemma}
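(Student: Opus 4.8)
The plan is to verify the two conditions separately, using the combinatorial criteria recalled in Section~\ref{sec:polyhedral-divisors}: the Gorenstein condition via the solvability of the linear system (\ref{eq:canonical-weight-vertical}) with an \emph{integral} solution $\u$, and the log-terminality via the inequality (\ref{eq:log-terminal}).

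First I would address log-terminality, which is the easier of the two. The multiplicities of the vertices of the polyhedral coefficients are $\mu=5$ for the vertex $\tfrac15(2,1)$ of $\D_0$, $\mu=3$ for the vertex $\tfrac13(-2,1)$ of $\D_\infty$, and $\mu=1$ for the two (integral) vertices of each $\D_{y_i}$. Hence the only points contributing a nonzero summand to the left-hand side of (\ref{eq:log-terminal}) are $0$ and $\infty$, and the sum equals $(1-\tfrac15)+(1-\tfrac13)=\tfrac45+\tfrac23=\tfrac{22}{15}<2$. So the inequality (\ref{eq:log-terminal}) holds and $X_k$ is log-terminal; note also that by the remark following (\ref{eq:log-terminal}) this places $\u$ in the interior of the weight cone, which is all one needs for $X_k$ to be a genuine Fano cone singularity once $\QQ$-Gorenstein is established.

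Next I would treat the Gorenstein property. By the discussion around (\ref{eq:canonical-weight-vertical}) the singularity is $\QQ$-Gorenstein precisely when the linear system has a solution $(a_0,a_\infty,a_{y_1},\dots,a_{y_k},\u)$, and it is Gorenstein exactly when the resulting canonical weight $\u\in M_\QQ$ is in fact in $M=\ZZ^2$ (equivalently, the canonical divisor is Cartier, not merely $\QQ$-Cartier). Concretely: the ray $\rho^2=\RR_{\ge0}(15k-4,8)$ of the tail cone does not meet $\deg\D$, so it contributes the equation $\langle\u,(15k-4,8)\rangle=1$; the ray $\rho^1$ does meet $\deg\D$ and so contributes no such equation. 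The vertical equations from (\ref{eq:canonical-weight-vertical}) read $\langle\u,\tfrac15(2,1)\rangle=a_0-\tfrac45$ for $\D_0$, $\langle\u,\tfrac13(-2,1)\rangle=a_\infty-\tfrac23$ for $\D_\infty$, and $\langle\u,(0,0)\rangle=\langle\u,(1,0)\rangle=a_{y_i}$ for each $\D_{y_i}$ (so the two vertices of $\D_{y_i}$ force $\u$ to be orthogonal to $(1,0)$, i.e. $\u=(0,c)$ for some $c$, and $a_{y_i}=0$). Plugging $\u=(0,c)$ into the $\rho^2$-equation gives $8c=1$, so $c=\tfrac18$ — wait, this would make $\u$ non-integral, so I would re-examine the normalisation: the point is that one must instead take the primitive generator of $\rho^2$ and the correct value $\langle \u, v_{\rho^2}\rangle = 1$; I would recompute $\u$ carefully from the full system, determine $a_0=\tfrac45$, $a_\infty=\tfrac23$ (forcing $\sum a_y = \tfrac45+\tfrac23\neq 2$, so the normalisation $\sum a_y=2$ must be achieved by a different splitting of the $a$'s across the support or by rescaling, which is exactly the content of solving the system), and then check that the resulting $\u$ is an integral vector. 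The cleanest route is simply to exhibit the solution: solve the three independent linear equations for the three unknowns $c$ and (the free parts of) the $a_y$, and observe that the answer lands in $\ZZ^2$ for every $k$.

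The main obstacle I expect is precisely this last integrality check for $\u$: the Gorenstein (as opposed to merely $\QQ$-Gorenstein) property is a lattice condition that depends on the specific coefficients $\tfrac15(2,1)$, $\tfrac13(-2,1)$ and on the tail-cone generator $(15k-4,8)$, and it is a priori sensitive to $k$. I would verify it by writing out (\ref{eq:canonical-weight-vertical}) explicitly, noting that the vertices of $\D_{y_i}$ pin $\u$ to the line $\u_2=0$ hmm or to $\u\perp(1,0)$, using the $\rho^2$-equation to solve for the remaining coordinate, and confirming the outcome is integral; the equation $\sum a_y=2$ together with the three vertical equations then determines all the $a_y$ and is automatically consistent since $\QQ$-Gorenstein-ness of complexity-one Fano cones is guaranteed. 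I would double-check the arithmetic against the $k=1$ picture in Figure~\ref{fig:pdiv}, where everything can be seen by hand, and then observe that the general-$k$ computation differs only in the first coordinate of the tail generator, which does not affect the orthogonality $\u\perp(1,0)$ nor the integrality of the second coordinate.
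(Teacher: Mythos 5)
Your log-terminality argument is fine and is exactly the paper's: only $0$ and $\infty$ contribute, and $\tfrac45+\tfrac23<2$. The Gorenstein half, however, has a genuine error that you flag but never resolve, and as written your computation would actually suggest the system (\ref{eq:canonical-weight-vertical}) is inconsistent. The mistake is the identification of which ray of $\tail\D$ misses $\deg\D$. From (\ref{eq:degree}), the right-hand vertex of $\deg\D$ is $\bigl(\tfrac{15k-4}{15},\tfrac{8}{15}\bigr)=\tfrac1{15}(15k-4,8)$, which \emph{lies on} the ray $\RR_{\geq0}(15k-4,8)$; on the other hand the functional $(1,1)$, which cuts out the ray $\RR_{\geq0}(-1,1)$, takes the value $\tfrac4{15}>0$ at the left-hand vertex and is nonnegative on $\sigma$, so $\deg\D$ is disjoint from $\RR_{\geq0}(-1,1)$. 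Hence the ray equation in (\ref{eq:canonical-weight-vertical}) is $\langle\u,(-1,1)\rangle=1$, not $\langle\u,(15k-4,8)\rangle=1$. Combined with your (correct) observation that the two vertices of each $\D_{y_i}$ force $\u\perp(1,0)$, this gives $\u=(0,1)$, which is integral; the vertex equations then yield $a_0=\tfrac15+\tfrac45=1$, $a_\infty=\tfrac13+\tfrac23=1$, $a_{y_i}=0$, so $\sum_y a_y=2$ holds on the nose. Your values $c=\tfrac18$, $a_0=\tfrac45$, $a_\infty=\tfrac23$ are artifacts of the wrong ray equation and of dropping the $\langle\u,v\rangle$ terms in the vertex equations.

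Two further points. First, there is no ``different splitting of the $a$'s'' or ``rescaling'' available to repair $\sum_y a_y\neq2$: once $\u$ is fixed, each $a_y$ is determined by the vertex equations, and $\sum_y a_y=2$ is a genuine solvability constraint, not a normalisation. Relatedly, you should not lean on the claim that $\QQ$-Gorenstein-ness ``is guaranteed'' for complexity-one Fano cones --- it is not; it is exactly the solvability of this system, which must be exhibited. Second, for the \emph{Gorenstein} (rather than merely $\QQ$-Gorenstein) conclusion the relevant check is that the solution $(\u,a_0,a_\infty,a_{y_i})$ is integral, which it is here; the paper's proof is precisely the one-line exhibition of this solution.
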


\begin{proof}
  The choices $\u=(0,1)$ and $a_0=a_\infty=1$ and $a_{y_i}=0$ for
  $i=1,\ldots,k$. solves the equations
  (\ref{eq:canonical-weight-vertical}). It follows that $X=\X(\D)$ is
  $\QQ$-Gorenstein. Moreover, $X$ is also log-terminal, since
  $\nicefrac{4}{5} + \nicefrac{2}{3} + 0 < 2$.
\end{proof}

Next, we calculate a unique (up to scaling) canditate for a Reeb field of a Sasaki-Einstein structure.
Recall that $\hat \xi = \xi/\langle\u, \xi\rangle \in [\u = 1] \cap \sigma$. By the above this implies $\hat \xi = (x,1)$ with $-1 \leq x \leq \frac{k\cdot 15-4}{8}$. By Theorem~\ref{thm:kstab-pdiv} we have to find the value for $x$ such that $D_{(1,0,0)}\vol \sigma_y^\vee(\hat \xi,0) = 0$ for some $y\in \PP^1$.

Consider the cone $\sigma_0$ as in Proposition~\ref{prop:degenerations}, which  corresponds to one of the toric degenerations of $X$. We obtain
\begin{align*}
\sigma_0&=\pos((2, 1, 5),( -2+3k, 1, -3),(-2, 1, -3),(-1,1,0)),\\
\sigma_0^\vee&=  \pos((-8, 15k - 4, -3k + 4),(0, 3, 1),(3, 3, -1),(5, 5, -3)).
\end{align*}

In order to calculate the volume of $\sigma^\vee_0(\xi,0)=\sigma^\vee_0(x,1,0)$ we subdivide the cone $\sigma_0^\vee$ in two simplicial cones
\begin{align*}
\omega_1 &= \pos((-8, 15k - 4, -3k + 4),(0, 3, 1),(5, 5, -3)),\\
\omega_2 &= \pos((0, 3, 1),(3, 3, -1),(5, 5, -3)).
\end{align*}

Then $\vol \sigma_0^\vee(v) = \vol \omega_1(v) + \vol \omega_2(v)$ and we may apply (\ref{eq:volume-simplicial}) from Example~\ref{exp:q-factorial-toric-1} to each summand and obtain
\begin{align}
\label{eq:example-volume}
\vol \sigma^\vee(x,1,0) &= \frac{8 \, {\left(15 \, k + 4\right)}}{15 \, {\left(15 \, k - 8 \, x - 4\right)} {\left(x + 1\right)}} + \frac{4}{15 \, {\left(x + 1\right)}^{2}}\\
&= \frac{4}{15}\cdot\frac{30kx+45k+4}{(x+1)^2(15k-8x-4)} \nonumber
\end{align}

Differentiating by $x$ gives
\begin{align*}
\frac{4}{15}\cdot \frac{480kx^2-6(75k^2-200k-16)x+(-900k^2+480k+64)}{(x+1)^3(15k-8x-4)^2}.
\end{align*}
The unique root $x_0 \in [-1,\frac{15k-4}{8}]$ of the numerator is given by
\begin{equation}
  \label{eq:xi-solution}
  x_0 = \frac{225 \, k^{2} + \sqrt{225 \, k^{2} + 600 \, k + 144} {\left(15 \, k + 4\right)} - 600 \, k - 48}{480 \, k}.
\end{equation}
This is an irrational number for $k \neq 3$. Indeed, note that we have
\begin{equation}
\sqrt{225 k^{2} + 600  k + 144}=\sqrt{(15k + 20)^2 -256}.
\label{eq:square-completion}
\end{equation}
This equals $63$ for $k=3$. For $k \geq 9$ one has 
\begin{equation}
(15k+19)^2 < (15k+20)^2-256 < (15k+20)^2.\label{eq:x0-bounds}
\end{equation}
Hence, the square root is necessarily irrational for these values of $k$. For the remaining values irrationality can be checked case by case.

 Now, consider any positive recaling $\xi$ of $\hat \xi = (x_0,1)$. 
\begin{proposition}
  For every $k \in \NN$ the Fano cone singularities $(X_k, \xi)$ are K-stable.
\end{proposition}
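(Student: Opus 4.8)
The plan is to verify the two conditions of Theorem~\ref{thm:kstab-pdiv} for the Reeb field $\hat\xi=(x_0,1)$ with $x_0$ as in (\ref{eq:xi-solution}); since those conditions depend only on $\hat\xi=\xi/\langle\u,\xi\rangle$, they pass automatically to every positive rescaling $\xi$. Recall that the canonical weight of $\D$ is $\u=(0,1)$, so $\u^\perp=\RR\cdot(1,0)$ is one-dimensional. I would first dispose of the vanishing condition: by Proposition~\ref{prop:vol-pdiv} the map $\eta\mapsto\vol(\sigma_y^\vee(\eta,0))$ restricts on $N_\RR$ to the volume functional $\vol$ for every $y\in\PP^1$, so that
\[
D_{(1,0,0)}\vol(\sigma_y^\vee(\hat\xi,0))=D_{(1,0)}\vol(\hat\xi)=\left.\frac{d}{dx}\vol(x,1)\right|_{x=x_0}.
\]
This is the derivative computed after (\ref{eq:example-volume}) evaluated at $x_0$, which vanishes since $x_0$ is a root of its numerator by (\ref{eq:xi-solution}). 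As $\u^\perp$ is spanned by $(1,0)$, this already gives the first condition of Theorem~\ref{thm:kstab-pdiv} (with $y=0$, in fact with any $y$).

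Next I would determine the admissible points. Among the coefficients of $\D$ only $\D_0$ and $\D_\infty$ have non-lattice vertices, while for a suitable $u\in(\tail\D)^\vee\cap M$ --- for instance $u=(1,1)$ --- both $\min_{v\in\D_0}\langle u,v\rangle$ and $\min_{v\in\D_\infty}\langle u,v\rangle$ fail to be integers; hence $y\in\PP^1$ is admissible precisely when $y\in\{0,\infty\}$. For such $y$ one has $a_y=1$, so by the Remark following Proposition~\ref{prop:degenerations} the canonical weight of $\sigma_y$ is $\u_y=(\u,a_y+1)=(0,1,2)$ and $\u_y^\perp=\RR(1,0,0)\oplus\RR(0,-2,1)$. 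A general element of $\u_y^\perp\cap(N_\QQ\times\ZZ_{>0})$ has the form $a(1,0,0)+c(0,-2,1)$ with $c\in\ZZ_{>0}$ and $a\in\QQ$, so by linearity of the directional derivative together with the vanishing just established,
\[
D_{a(1,0,0)+c(0,-2,1)}\vol(\sigma_y^\vee(\hat\xi,0))=c\cdot D_{(0,-2,1)}\vol(\sigma_y^\vee(\hat\xi,0)).
\]
Therefore the second condition of Theorem~\ref{thm:kstab-pdiv} is equivalent to the two strict inequalities $D_{(0,-2,1)}\vol(\sigma_0^\vee(\hat\xi,0))>0$ and $D_{(0,-2,1)}\vol(\sigma_\infty^\vee(\hat\xi,0))>0$.

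It remains to establish these two inequalities, which is where the actual work lies. For $y=0$ I would use the simplicial subdivision $\sigma_0^\vee=\omega_1\cup\omega_2$ and formula (\ref{eq:volume-simplicial}) to express $\vol(\sigma_0^\vee(\eta))$ as a sum of two explicit rational functions of $\eta\in\RR^3$, substitute $\eta=(x_0,1-2t,t)$ (which traces the line through $(\hat\xi,0)$ in the direction $(0,-2,1)$ inside the hyperplane $[\u_0=1]$), differentiate at $t=0$, and insert the value (\ref{eq:xi-solution}) of $x_0$; since the linear forms occurring in the denominators of (\ref{eq:volume-simplicial}) are positive at $(\hat\xi,0)$, only the sign of a rational expression in $k$ then has to be checked. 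For $y=\infty$ I would write out the cone $\sigma_\infty$ from Proposition~\ref{prop:degenerations}, subdivide $\sigma_\infty^\vee$ into simplicial cones, and repeat the computation. The main obstacle is exactly this last part: carrying out the two directional-derivative computations and proving positivity uniformly in $k$, in particular handling the irrational $x_0$ and performing the genuinely new volume computation attached to $\sigma_\infty$. Once both inequalities hold, Theorem~\ref{thm:kstab-pdiv} yields the K-stability of $(X_k,\hat\xi)$, hence of $(X_k,\xi)$.
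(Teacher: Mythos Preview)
Your overall strategy is exactly that of the paper --- verify the vanishing condition for the first part of Theorem~\ref{thm:kstab-pdiv}, identify the admissible points as $\{0,\infty\}$, use linearity together with the already-established vanishing in direction $(1,0,0)$ to reduce to a single direction with positive last coordinate for each admissible $y$, and then carry out the explicit volume-derivative computation via the simplicial subdivision of $\sigma_y^\vee$ and formula~(\ref{eq:volume-simplicial}).

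The one genuine gap is your identification of $\u_y$. You take $\u_y=(0,1,2)$ from the Remark after Proposition~\ref{prop:degenerations}, but a direct check against the ray generators of $\sigma_0$ listed just before (\ref{eq:example-volume}) shows that in fact $\u_0=(0,1,0)$: each of the primitive vectors $(2,1,5)$, $(3k-2,1,-3)$, $(-2,1,-3)$, $(-1,1,0)$ pairs with $(0,1,0)$ to give~$1$, whereas for instance $\langle(0,1,2),(2,1,5)\rangle=11$. (The formula in that Remark appears to be misstated; for the present example the correct last entry is $1-a_y=0$, not $a_y+1$.) Consequently $\u_0^\perp$ is spanned by $(1,0,0)$ and $(0,0,1)$, and the direction to test is $(0,0,1)$, not $(0,-2,1)$. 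Your vector $(0,-2,1)$ satisfies $\langle\u_0,(0,-2,1)\rangle=-2\neq 0$, so moving along it leaves the hyperplane $[\u_0=1]$; the sign of that directional derivative then mixes the genuine Futaki contribution with a radial scaling term of the volume, and neither implies nor is implied by the positivity required in Theorem~\ref{thm:kstab-pdiv}. The paper accordingly computes $\left.\tfrac{d}{dt}\right|_{t=0}\vol\bigl(\sigma_0^\vee(x_0,1,t)\bigr)$, i.e.\ the derivative in direction $(0,0,1)$, and establishes its positivity by bounding $x_0$ from below via (\ref{eq:x0-bounds}) and estimating the numerators for $k\geq 9$, leaving the finitely many small $k$ to direct inspection; the same is then indicated for $\sigma_\infty$. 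Once you correct the direction to $(0,0,1)$ (and likewise recompute $\u_\infty$ directly from the generators of $\sigma_\infty$), your plan coincides with this.
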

\begin{proof}
  There are only two admissible choices for $y \in \PP^1$, namely $y=0$ and $y=\infty$.
  We first apply the criterion of Theorem~\ref{thm:kstab-pdiv} for the
  degeneration corresponding to $\sigma_0$. Similar to
  (\ref{eq:example-volume}) we calculate
  \begin{align}
    D_{(\hat \xi,1)} \vol \sigma^\vee(\hat \xi,1) &= \left.\frac{d}{dt}\right|_{t=0} \vol \left(\sigma^\vee_0(x_0,1,t)\right)\nonumber\\ 
& = \frac{24 {\left(15 k + 4\right)(3k-4)}-8 {\left(15 k  + 4\right)(15k-8x_0-4)}}{45{\left(15 k - 8 \, x_0 - 4\right)^2} {\left(x_0 + 1\right)}}+ \label{eq:line-1}\\
                                                  &+ \frac{72 \, {\left(15 \, k + 4\right)}-20(15k-8x_0-4)}{225 {\left(15  k - 8  x_0 - 4\right)} {\left(x_0 + 1\right)}^{2}} +\label{eq:line-2} \\
                                                  &+ \frac{56}{225 \, {\left(x_0 + 1\right)}^{3}}\nonumber
  \end{align}
  We have to show that this is a positive number for every
  $k \in \NN$. We do this explicitely for $k \geq 9$. The remaining
  cases can be checked in a similar fashion by bounding $x_0$ from above and below for a particular value of $k$. Plugging the lower bound of
  (\ref{eq:x0-bounds}) into (\ref{eq:xi-solution}) and using $k \geq 9$ gives
  the following estimate
   \[x_0 \geq \frac{450k^2 - 255k+28}{480k} \geq \frac{7}{8}k.\]
   Now, using this lower bound one obtains 
   $120k^2-928k-256$ and $920k+368$ as lower bounds for the numerators of (\ref{eq:line-1}) and (\ref{eq:line-2}), respectively. Those are both positive for $k \geq 9$. On the other hand, the denominators above are always positive due to the the condition $-1 \leq x_0 \leq 15k-4$.

  Similary we can check the positivity of the Futaki invariant for the degeneration corresponding to  $\sigma_\infty$. The corresponding cone and its dual are given as
  follows.
  \begin{align*}
    \sigma_\infty&=\pos((-2, 3, 1),(2,  -5, 1),(2+5k,-5,1),(-1,0,1)),\\
    \sigma_\infty^\vee&=  \pos((-8, -5k - 4, 15k - 4),(0, 1, 5),(5, 3, 5),(3, 1, 3)).
  \end{align*}
\end{proof}

\begin{lemma}
\label{lem:simply-connected}
  The punctured cones $X_k \setminus \{0\}$ are simply-connected.
\end{lemma}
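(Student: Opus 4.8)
The plan is to compute $\pi_1(X_k \setminus \{0\})$ via the toric degeneration machinery already set up in the paper. Since $X_k \setminus \{0\}$ is a deformation retract onto the link $L_k$ (as noted in the proof of Lemma~\ref{lem:sphere-homology}, using a good $\CC^*$-action), and since fundamental groups of links are invariant under the equivariant flat degenerations considered here — the total space $\X \to \CC$ of a special degeneration deformation-retracts onto any fibre, so all punctured fibres share a $\pi_1$ — it suffices to compute the fundamental group of the punctured affine toric variety $U_{\sigma_0} \setminus \{0\}$, where $\sigma_0$ is the cone from Proposition~\ref{prop:degenerations} with $y = 0$, namely
\[
\sigma_0 = \pos\{(2,1,5),\,(-2+3k,1,-3),\,(-2,1,-3),\,(-1,1,0)\} \subset N_\RR \times \RR \cong \RR^3.
\]

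First I would recall the standard description of $\pi_1$ of the smooth locus (equivalently, of the punctured cone when the singularity is isolated) of an affine toric variety $U_\sigma$: it is $N / N'$, where $N' \subset N$ is the sublattice generated by the primitive ray generators of $\sigma$. Concretely, $\pi_1(U_{\sigma_0}\setminus\{0\}) \cong (N\times\ZZ) / \langle (2,1,5),(3k-2,1,-3),(-2,1,-3),(-1,1,0)\rangle$. So the second step is the purely arithmetic task of computing the cokernel of the $3 \times 4$ integer matrix whose rows are these four ray generators, i.e.\ showing its maximal minors have gcd~$1$ (equivalently the Smith normal form is $\mathrm{diag}(1,1,1)$), so that the quotient is trivial. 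This should follow by a direct gcd computation — for instance the submatrix on rows $(2,1,5)$, $(3k-2,1,-3)$, $(-1,1,0)$ already appeared in the isolatedness proof where its minors were $3k-4$, $15k-4$, $8$, with gcd $1$ for odd $k$; including the fourth generator only enlarges the lattice, so the same coprimality guarantees triviality.

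The main obstacle is justifying the claim that $\pi_1$ of the punctured cone is unchanged under the special degeneration — strictly one should either cite a local-cone deformation-invariance statement for the link (the total space of the degeneration is a normal variety with an attracting $\CC^*$-action fixing the central point, hence retracts to the central fibre, and simultaneously retracts to each nearby fibre, forcing the links to be homotopy equivalent), or alternatively bypass degenerations entirely and compute $\pi_1(X_k \setminus \{0\})$ directly from the polyhedral divisor $\D$ using the results of Altmann--Hausen on fundamental groups of $T$-varieties. I would go with the degeneration route for brevity, remarking that all fibres of a special degeneration have homotopy-equivalent links, and then reduce to the toric lattice-quotient computation above; the remaining arithmetic is routine and parallels the minor computations already carried out in the isolatedness lemma.
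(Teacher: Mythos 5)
The reduction to the toric central fibre does not work: the fundamental group of the punctured cone (equivalently, of the link) is \emph{not} invariant under special degenerations. What is preserved under an equivariant flat degeneration is the Hilbert function of the coordinate ring --- this is exactly what justifies Proposition~\ref{prop:vol-pdiv} --- but the topology of the link is not. Your justification, that the total space retracts onto the central fibre ``and simultaneously retracts to each nearby fibre, forcing the links to be homotopy equivalent'', is where the argument breaks: the $\CC^*$-action does retract $\X$ onto $\X_0$, but the inclusion of a nearby fibre $\X_1 \hookrightarrow \X$ is in general not a homotopy equivalence --- this failure is precisely the phenomenon of vanishing cycles. Already in the present situation one sees that topological invariants of the punctured cone jump under the degenerations of Proposition~\ref{prop:degenerations}: the central fibres are toric while $X_k$ is not, so for instance $\Cl \cong H^2(L,\ZZ)$ changes. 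There is no invariance statement of the kind you need to cite.

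There is a second, independent problem: even granting the passage to the central fibre, the formula $\pi_1 \cong (N\times\ZZ)/N'$, with $N'$ spanned by the primitive ray generators, computes $\pi_1$ of the \emph{smooth locus}, and agrees with $\pi_1$ of the punctured cone only when the toric singularity is isolated. Here $U_{\sigma_0}$ is not an isolated singularity: the two-dimensional face of $\sigma_0$ spanned by $(3k-2,1,-3)$ and $(-2,1,-3)$ has $2\times 2$ minors $3k$, $-9k$, $0$, hence is not a regular cone, so $U_{\sigma_0}$ is singular along a curve through the vertex and $U_{\sigma_0}\setminus\{0\}$ properly contains singular points; your lattice-index formula does not compute its fundamental group. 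The route the paper actually takes is the one you mention only in passing as an alternative: compute $\pi_1(X_k\setminus\{0\})$ directly from the polyhedral divisor $\D$ using the explicit presentation of the fundamental group of a complexity-one $T$-variety from \cite{2017arXiv171202172L} (generators from the points of the support and a lattice basis of $N$, relations from the rays of $\sigma$ and the vertices of the coefficients $\D_y$), and then verify that these relations force the group to be trivial. That direct computation is short, but it is the actual content of the lemma and cannot be replaced by the degeneration argument.
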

\begin{proof}
  We use  \cite[Thm.~ 3.4]{2017arXiv171202172L} to calculate the fundamental group of $X_k \setminus \{0\}$. A set of generators is given by
\[
b_0,b_\infty,b_{1},\ldots, b_{k} \qquad t_1,t_2.
\]
Here, every point in the support provides one generator $b_i$ and the remaining two generators $t_1, t_2$ correspond to a lattice basis of $N$. Now, according to \cite[Thm.~ 3.4]{2017arXiv171202172L} we obtain the following relations
\begin{align}
  b_0 b_\infty \prod_i b_i\label{eq:fg-1}\\
  [b_i,t_j],\; [t_i,t_j] \label{eq:fg-2} \\
  t_1^{-1}t_2, \quad t_1^{15k-4}t_2^8 && \text{(from the rays of $\sigma$)} \label{eq:fg-3}\\
  t_1^2t_2b_0^5,\quad t_1^{-2}t_2b_\infty^3&& \text{(from the vertices of  $\D_0$, $\D_\infty$)} \label{eq:fg-4}\\
  b_i,\quad t_1b_i,\quad \text{ for } i=1,\ldots,k.&& \text{(from the vertices of  $\D_i$, $i=1,\ldots, k$)}\label{eq:fg-5}
\end{align}
Relation (\ref{eq:fg-5}) implies $t_1 = 1$ and $b_i=1$ for $i=1,\ldots,k$. Now, by (\ref{eq:fg-3}) we obtain $t_2=1$. Relation (\ref{eq:fg-4}) gives $b_0^5=1$ and $b_\infty^3=1$. Since by (\ref{eq:fg-1})  the equality $b_0=b_\infty^{-1}$ holds, we obtain
\[1=1^2 1^5=(b_\infty^3)^2b_0^5=b_\infty^6b_\infty^{-5}=b_\infty = b_0^{-1}.\]
Hence, the group is trivial.
\end{proof}

\begin{proof}[Proof of Theorem~\ref{thm:family-example}] It follows from the previous results in this section and Theorem~\ref{thm:k-stab-Einstein} that the links $L_k$ of our isolated Fano cone singularities $X_k$ admit Sasaki-Einstein structures. It remains to show, that the links $L_k$ are diffeomorphic to  $k(S^2 \times S^3)$. 
By Lemma~\ref{lem:simply-connected} the link $L_k$ is simply-connected as it is a deformation retract of  $X_k \setminus \{0\}$. Now, \cite[Cor.~3.15]{tdiv} gives $\Cl(X_k) \cong \ZZ^{k}$. As before we can apply \cite{zbMATH03715707} to conclude $H^2(L_k) \cong \ZZ^{k}$, as well.

On the other hand, among the simply connected $5$-manifolds from the Smale-Barden classification only $S^5$ and $k (S^2 \times S^3)$ can admit Sasaki-Einstein structures with $(S^1)^2$-symmetry, see \cite[Prop. 10.2.27]{zbMATH05243165}. Hence, the result follows from the fact that  $H^2(k(S^2 \times S^3)) \cong \ZZ^k$.
\end{proof}

\begin{remark}
\label{rem:cox-construction}
  There is an alternative way to describe the affine varieties $X_k$. By \cite[Thm.~4.8]{tcox} their \emph{Cox ring} is given as 
\[\mathcal{R}=\CC[S,T_0,T_\infty,T_1,\bar T_1,\ldots,T_k,\bar T_k]/I\]
 with $I$ being a complete intersection ideal generated by elements
\[
T_0^5+y_i \cdot T_\infty^3+ T_i\bar T_i \qquad \text{ for } i=1, \ldots, k. 
\]
\
This ring is naturally graded by $H \cong \ZZ^{k}$. To describe this grading we choose a basis $e_1,\ldots, e_k$ and set
  \begin{alignat}{2}
    \deg_H(T_1)&=-4e_1 + \sum_{i=2}^k e_k \hspace{2em} & \deg_H(\bar T_1)&=19e_1 - \sum_{i=2}^k e_k \label{eq:weights1}\\
    \deg_H(T_k)&=-e_k & \deg_H(\bar T_k)&=15e_1+e_k\\
    \deg_H(T_0)&=3e_1 & \deg_H(T_\infty)&=5e_1\\
    \deg_H(S)&=-8e_1 &  &\label{eq:weight4} 
  \end{alignat}
Then the affine coordinate ring $\CC[X_k]$ is given as the degree-$0$-component $\mathcal{R}_0$ with respect to this $H$-grading. Moreover there is an additional $M$-grading with
$M=\ZZ^2$ and $\deg_M(S)=(0,1)$ and $\deg(T_i)=(1,1)$ and $\deg(\bar T_i)=(-1,-1)$. This induces an $M$-grading on $\CC[X_k]=\mathcal{R}_0$ and, hence, a $2$-torus action on $X_k$. These considerations allow to study the quasi-affine varieties $X_k$ or $X_k \setminus \{0\}$, respectively, via the methods from \cite{zbMATH06717888}. The main idea is to obtain $X_k \setminus \{0\}$ by a quotient construction from the \emph{total coordinate space} $\overline{X}_k = V(I) \subset \CC^{2k+3}$. It should be possible to translate this into a Sasakian reduction (in the sense of \cite[Sec.~8.5]{zbMATH05243165}) of the smooth locus of the link $\overline{L}_k = \overline{X}_k \cap S^{4k+5}$ by the subtorus $T^{k} \subset T^{2k+3}$ given by the weights in (\ref{eq:weights1})-(\ref{eq:weight4}).
The other way around using the same subtorus one may first apply Sasakian reduction to the ambient $(4k+5)$-sphere and obtain $L_k$ equivariantly embedded into the corresponding toric object. For $k>1$ this toric objects will be singular, but $L_k$ does not intersect its singular locus. However, for $k=1$ this gives indeed an embedding of $L_1$ as a hypersurface of a compact toric Sasakian manifold.
\end{remark}



\bibliography{sasaki}
\bibliographystyle{halpha}
\end{document}